\begin{document}
\newcounter{comp1}

\newtheorem{definition}{Definition}
\newtheorem{proposition}{Proposition}
\newtheorem{example}{Example}
\newtheorem{method}{Method}
\newtheorem{lemma}{Lemma}
\newtheorem{theorem}{Theorem}
\newtheorem{corollary}{Corollary}
\newtheorem{assumption}{Assumption}
\newtheorem{algorithm}{Algorithm}
\newtheorem{remark}{Remark}
\newcommand{\fig}[1]{\begin{figure}[hbt]
                  \vspace{1cm}
                  \begin{center}
                  \begin{picture}(15,10)(0,0)
                  \put(0,0){\line(1,0){15}}
                  \put(0,0){\line(0,1){10}}
                  \put(15,0){\line(0,1){10}}
                  \put(0,10){\line(1,0){15}}
                  \end{picture}
                  \end{center}
                  \vspace{.3cm}
                  \caption{#1}
                  \vspace{.5cm}
                  \end{figure}}
\newcommand{\Axk}{A(x^k)}
\newcommand{\Aumb}{\sum_{i=1}^{N}A_{i}u_{i}-b}
\newcommand{\Kk}{K^k}
\newcommand{\Kki}{K_{i}^{k}}
\newcommand{\Aukmb}{\sum_{i=1}^{N}A_{i}u_{i}^{k}-b}
\newcommand{\Au}{\sum_{i=1}^{N}A_{i}u_{i}}
\newcommand{\Aukpmb}{\sum_{i=1}^{N}A_{i}u_{i}^{k+1}-b}
\newcommand{\nab}{\nabla^2 f(x^k)}
\newcommand{\xk}{x^k}
\newcommand{\ubk}{\overline{u}^k}
\newcommand{\uhk}{\hat u^k}
\def\QEDclosed{\mbox{\rule[0pt]{1.3ex}{1.3ex}}} 
\def\QEDopen{{\setlength{\fboxsep}{0pt}\setlength{\fboxrule}{0.2pt}\fbox{\rule[0pt]{0pt}{1.3ex}\rule[0pt]{1.3ex}{0pt}}}}
\def\QED{\QEDopen} 
\def\proof{\par\noindent{\em Proof.}}
\def\endproof{\hfill $\Box$ \vskip 0.4cm}
\newcommand{\RR}{\mathbf R}
%
\title{Stochastic Primal-Dual Coordinate Method with Large Step Size for Composite Optimization with Composite Cone-constraints}
%
%
%

\author{Daoli~Zhu~
        and~Lei~Zhao
\thanks{Manuscript received February 16, 2019; revised.}
\thanks{Daoli Zhu was with Antai College of Economics and Management and Sino-US Global Logistics
Institute, Shanghai Jiao Tong University, 200030 Shanghai, China (e-mail: dlzhu@sjtu.edu.cn)}
\thanks{Lei Zhao was with the Antai College of Economics and Management, Shanghai
Jiao Tong University, 200030 Shanghai, China (e-mail: l.zhao@sjtu.edu.cn)}
}

%
%

\markboth{Journal of \LaTeX\ Class Files,~Vol.~14, No.~8, August~2015}%
{Shell \MakeLowercase{\textit{et al.}}: Bare Demo of IEEEtran.cls for IEEE Journals}
%



\maketitle

\begin{abstract}
We introduce a stochastic coordinate extension of the first-order primal-dual method studied by Cohen and Zhu (1984) and Zhao and Zhu (2018) to solve Composite Optimization with Composite Cone-constraints (COCC). In this method, we randomly choose a block of variables based on the uniform distribution. The linearization and Bregman-like function (core function) to that randomly selected block allow us to get simple parallel primal-dual decomposition for COCC. We obtain almost surely convergence and $O(1/t)$ expected convergence rate in this work. The high probability complexity bound is also derived in this paper.
\end{abstract}

\begin{IEEEkeywords}
composite optimization with composite cone-constrains, stochastic primal-dual coordinate method with large step size, augmented Lagrangian.
\end{IEEEkeywords}

%
\IEEEpeerreviewmaketitle
\section{Introduction}\label{Intro}
Motivated by recent applications in big data analysis, there has been an explosive growth in interest in the design and analysis of block coordinate descent type (BCD-type) methods for large-scale convex optimization. (see~\cite{LiOsher, QinScheinberg, WuLange, YunToh}) In these applications, the datasets used for computation are very big and are often distributed in different locations. It is often impractical to assume that optimization algorithms can traverse an entire dataset once in each iteration, because doing so is either time consuming or unreliable, and often results in low resource utilization due to necessary synchronization among different computing units (e.g., CPUs, GPUs, and cores) in a distributed computing environment.  On the other hand, BCD-type algorithms can make progress by using information obtained from a randomly selected subset of data and, thus, provide much flexibility for their implementation in the aforementioned distributed environments. The main advantage of BCD-type method is to reduce the complexity and memory requirements per iteration. These benefits are increasingly important for very-large scale problem.\\
\indent In this paper, we consider the nonlinear convex cone-constrained optimization problem known as a Composite Optimization with Composite Cone-constrains (COCC):
\begin{equation}\label{Prob:general-function}
\begin{array}{lll}
\mbox{(P):}  &\min       & G(u)+J(u)      \\
             &\rm {s.t}  & \Theta(u)=\Omega(u)+\Phi(u)\in -\mathbf{C} \\
             &           & u\in \mathbf{U}
\end{array}
\end{equation}
where $G$ is a convex smooth function on the closed convex set $\mathbf{U}\subset \RR^{n}$ and $J$ is a convex, possibly nonsmooth function on $\mathbf{U}\subset \RR^{n}$. $\Omega$ is a smooth and $\Phi$ is a possibly nonsmooth mapping from $\RR^{n}$ to $\RR^{m}$. $\Omega(u)$ and $\Phi(u)$ are $\mathbf{C}$-convex and $\mathbf{C}$ is a nonempty closed convex cone in $\RR^{m}$ with vertex at the origin, that is, $\alpha\mathbf{C}+\beta\mathbf{C}\subset \mathbf{C}$, for $\alpha,\beta\geq 0$. It is obvious that when $\mathring{\mathbf{C}}$ (the interior of $\mathbf{C}$) is nonempty, the constraint $\Theta(u)\in -\mathbf{C}$ corresponds to an inequality constraint. The case $\mathbf{C}=\{0\}$ corresponds to an equality constraint. $\mathbf{C}^{*}$ denotes the conjugate cone i.e. $\mathbf{C}^*=\{y|\langle y,x\rangle\geq 0, \forall x\in\mathbf{C}\}$. We note that COCC has full composite structure.\\ Assume that both $J(u)=\sum\limits_{i=1}\limits^{N}J_i(u_i)$ and $\Phi(u)=\sum\limits_{i=1}\limits^{N}\Phi_{i}(u_{i})$ are additive respect to following space decomposition:
\begin{equation}\label{spacedecomposition}
\mathbf{U}=\mathbf{U}_1\times\mathbf{U}_2\cdots\times\mathbf{U}_N, u_i\in\mathbf{U}_i\subset \RR^{n_i}~\mbox{and}~ \sum\limits_{i=1}\limits^{N}n_i=n.
\end{equation}
\subsection{Related works}
\indent For problems without constraints, there are two variations of BCD discussed the most by researchers. The first variation is on block-choosing strategy. One common approach for block choosing is cyclic strategy. Tseng~\cite{Tseng} proved the convergence of a BCD of cyclic strategy. Luo and Tseng~\cite{LuoZeng} and Wang and Lin~\cite{WangLin} proved local and global linear convergence under specific assumptions respectively. The other approach is randomized strategy. Nesterov~\cite{Nesterov} studied the convergence rate of randomized BCD for convex smooth optimization. Richt\'arik and Tak\'a\v c~\cite{RT} and Lu and Xiao~\cite{LuXiao} extended Nesterov's technique to composite optimization. The point read to evaluate the gradient in each iteration is the second variation of BCD. If the read points have different "ages", this type of BCD called asynchronous BCD; otherwise, it is called synchronous BCD. All the variants of BCD reviewed above are synchronous BCD. Liu and Wright~\cite{LiuWright1} and Liu et. al.~\cite{LiuWright2} established the convergence rate of asynchronous BCD for composite optimization and convex smooth optimization without constraints, respectively.\\
\indent For problems with constraints, there are only a few works. Gao et. al.~\cite{GaoXuZhang} proposed a coordinate-type method for problems with linear coupling constraints. Necoara and Patrascu~\cite{Necoara} proposed a random coordinate descent algorithm for an optimization problem with one linear constraint. Xu and Zhang~\cite{Xu18} analyzed primal-dual coordinate type method for a linear constrained strongly convex problem. Moreover, Xu~\cite{Xu19} proposed an asynchronous primal-dual coordinate-type method for linear constrained problems. For problem with nonlinear constraints, Xu~\cite{Xu17} proposed a coordinate-type method for problem with nonlinear inequality constraints. To the best of our knowledge, there is no primal-dual coordinate convergence rate results for COCC.
\subsection{Main contributions and outline of this paper}
\indent In this paper, we propose a Stochastic Primal-Dual Coordinate with Large step size (SPDCL) method based on the variant auxiliary problem principle (Zhao and Zhu~\cite{ZhaoZhu2017}) for COCC. In this method, we randomly update one block of variables based on the uniform distribution. The sequence generated by our algorithm is proved to converge to an optimal solution of problem (P) with probability $1$. The expected $O(1/t)$ convergence rate is also obtained for problem (P) under the convexity assumptions. The probability complexity bound is also derived in this paper.\\
\indent The rest of this paper is organized as follows. Section~\ref{Pre} is devoted to technical preliminaries. The updating scheme of SPDCL for (P) is presented in Section~\ref{SPDC}. In Section~\ref{convergence}, we establish the convergence. In Section~\ref{rate}, expected $O(1/t)$ sub-linear convergence rate and the high probability complexity bound are established.
\section{Preliminaries}\label{Pre}
In this section, we first provide some preliminaries that are useful for our further discussions and then summarize some notations and assumptions to be used. We denote $\langle \cdot \rangle $ and $\| \cdot \|$ as the inner product and Euclidean norm of vector, respectively.
\subsection{Notations and assumptions}
Throughout this paper, we make the following standard assumptions for Problem (P):
\begin{assumption}\label{assump1}
{\rm
\noindent \begin{itemize}
\item[(i)] $J$ is a convex, l.s.c function such that $\mathbf{dom}J\cap \mathbf{U}\neq\emptyset$, $J$ is not necessary differentiable. $J$ is subgradientiable and has linear bounded subgradients in $\mathbf{U}$, that is
    \begin{eqnarray*}
    \exists c_1>0, c_2>0,\;\forall u\in\mathbf{U},\;\forall r\in\partial J(u),\;\|r\|\leq c_1\|u\|+c_2.
    \end{eqnarray*}
\item[(ii)] $G$ is a convex and differentiable with its derivative Lipschitz of constant $B_{G}$.
\item[(iii)] $\Omega$ is $\mathbf{C}$-convex mapping from $\mathbf{U}$ to $\mathbf{C}$, where $\forall u,v\in \mathbf{U}$, $\forall\alpha\in [0,1]$,
\begin{equation}\label{Theta_C_convex}
\Omega(\alpha u+(1-\alpha)v)-\alpha\Omega(u)-(1-\alpha)\Omega(v)\in - \mathbf{C}.
\end{equation}
Moreover, the derivative of $\Omega$ exists and meets the following condition: $\exists T\in\mathbf{C}$ such that $\forall u,v\in \mathbf{U}$,
\begin{equation}\label{UB-1}
\langle\nabla\Omega(u)-\nabla\Omega(v),u-v\rangle-\|u-v\|^2T\in-\mathbf{C}.
\end{equation}
\item[(iv)] $\Psi$ is $\mathbf{C}$-convex mapping from $\mathbf{U}$ to $\mathbf{C}$.\\
\item[(v)] $\Theta(u)$ is Lipschitz with constant $\tau$ on an open subset $\mathcal{O}$ containing $\mathbf{U}$, where
\begin{equation}\label{Theta_Lipschitz}
\forall u,v\in \mathcal{O}, \|\Theta(u)-\Theta(v)\|\leq\tau\|u-v\|.
\end{equation}
\item[(vi)] Constraint Qualification Condition. When $\mathring{\mathbf{C}}\neq\emptyset$, we assume that
\begin{equation}\label{Theta_CQC_neq}
\mbox{{\bf CQC:}}\qquad\Theta(\mathbf{U})\cap(-\mathring{\mathbf{C}})\neq\emptyset.\qquad
\end{equation}
For the case $\mathbf{C}=\{0\}$, we assume that
$0\in\mbox{interior of}~\Theta(\mathbf{U})$.
\item[(vii)] There exists at least one saddle point for Lagrangian of {\rm(P)}.
\end{itemize}
}
\end{assumption}
\indent Condition (i)-(iv) guarantee that (P) is a convex problem. The CQC condition (vi) implies that the Lagrangian dual function is coercive and the dual optimal solution set is bounded~\cite{CohenZ}. Furthermore, the following subsection gives augmented Lagrangian and first-order primal-dual decomposition algorithm for (P).
\subsection{Augmented Lagrangian and first-order primal-dual decomposition algorithm}
In this subsection, the Lagrangian of (P) is defined as:
\begin{equation}\label{func:L}
L(u,p)=(G+J)(u)+\langle p,\Theta(u)\rangle,
\end{equation}
and a saddle point $(u^*,p^*)\in \mathbf{U}\times \mathbf{C}^*$ is such that
\begin{equation}
\forall u\in \mathbf{U},\; \forall p\in \mathbf{C}^{*}:\; L(u^{*},p)\leq L(u^{*},p^{*})\leq L(u,p^{*}). \label{saddle point:L}
\end{equation}
\indent Under Assumption~\ref{assump1}, there exist saddle points of $L$ on $\mathbf{U}\times \mathbf{C}^*$. The dual function $\psi$ is defined as
\begin{equation*}
\psi(p)=\begin{cases}\displaystyle\min_{u\in \mathbf{U}}L(u,p)  &\forall p\in\mathbf{C}^*\\
                     \displaystyle-\infty  &\mbox{otherwise.}\end{cases}
\end{equation*}
\indent The function $\psi$ is concave and sub-differentiable. Using dual function $\psi(p)$, we consider the primal-dual pair of nonlinear convex cone optimization:
\begin{equation*}
\begin{array}{lllllll}
  \mbox{(P):} &\min      & (G+J)(u)                 &\qquad&\mbox{(D):}    &\max      & \psi(p) \\
              &\rm {s.t} & \Theta(u)\in -\mathbf{C} &\qquad&               &\rm {s.t} & p\in\mathbf{C}^*\\
              &          & u\in\mathbf{U}           &\qquad&               &          &
\end{array}
\end{equation*}
\indent The following theorem characterizes a saddle point optimality condition for the primal and dual problem.
\begin{theorem}\label{theo:Lag}
A solution $(u^*,p^*)$ with $u^*\in \mathbf{U}$ and $p^*\in \mathbf{C}^*$ is a saddle point for the Lagrangian function $L(u,p)$ if and only if
\begin{itemize}
\item[{\rm(i)}] $L(u^*,p^*)=\min\limits_{u\in \mathbf{U}}L(u,p^*)$\\
           or the following variational inequality holds: $\forall u\in\mathbf{U}$, $$\langle\nabla G(u^*),u-u^*\rangle+J(u)-J(u^*)+\langle p^*,\Theta(u)-\Theta(u^*)\rangle\geq 0;$$
\item[{\rm(ii)}] $\Theta(u^*)\in -\mathbf{C}$;
\item[{\rm(iii)}] $\langle p^*,\Theta(u^*)\rangle=0$.
\end{itemize}
Moreover, $(u^*,p^*)$ is a saddle point if and only if $u^*$ and $p^*$ are, respectively, optimal solutions to the primal and dual problems (P) and (D) with no duality gap, that is, with $(G+J)(u^*)=\psi(p^*)$.
\end{theorem}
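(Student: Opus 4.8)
The plan is to split the statement into two equivalences: \emph{(a)} the saddle-point property $\iff$ conditions (i)--(iii), and \emph{(b)} the saddle-point property $\iff$ ($u^*$ solves (P), $p^*$ solves (D), with zero duality gap). For (a) I would unfold the definition \eqref{saddle point:L} into its two inequalities and treat each separately. The right-hand inequality, $L(u^*,p^*)\le L(u,p^*)$ for all $u\in\mathbf{U}$, \emph{is} the assertion $L(u^*,p^*)=\min_{u\in\mathbf{U}}L(u,p^*)$, i.e. the first form of (i). To pass to its variational-inequality form I would first note that $\langle p^*,\Theta(\cdot)\rangle$ is convex on $\mathbf{U}$ --- this follows from $\mathbf{C}$-convexity of $\Theta=\Omega+\Phi$ together with $p^*\in\mathbf{C}^*$ --- so $L(\cdot,p^*)$ is convex on the convex set $\mathbf{U}$; then I would test minimality along the feasible segments $u^*+t(u-u^*)$ and let $t\downarrow 0$, using the first-order Taylor expansion for the smooth term $G$ and the chord inequality for the nonsmooth terms $J$ and $\langle p^*,\Theta(\cdot)\rangle$, which yields exactly the stated inequality. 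The converse direction is immediate from convexity of $G$, i.e. $G(u)-G(u^*)\ge\langle\nabla G(u^*),u-u^*\rangle$.

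For the left-hand inequality $L(u^*,p)\le L(u^*,p^*)$ for all $p\in\mathbf{C}^*$, cancelling $(G+J)(u^*)$ reduces it to $\langle p-p^*,\Theta(u^*)\rangle\le 0$ for all $p\in\mathbf{C}^*$. Taking $p=2p^*$ and $p=\tfrac12 p^*$ (legitimate since $\mathbf{C}^*$ is a cone) forces $\langle p^*,\Theta(u^*)\rangle=0$, which is (iii); substituting this back leaves $\langle p,\Theta(u^*)\rangle\le 0$ for every $p\in\mathbf{C}^*$, i.e. $-\Theta(u^*)\in(\mathbf{C}^*)^*$, and the bipolar theorem for the closed convex cone $\mathbf{C}$ gives $(\mathbf{C}^*)^*=\mathbf{C}$, hence $\Theta(u^*)\in-\mathbf{C}$, which is (ii). Conversely, if (ii)--(iii) hold then for any $p\in\mathbf{C}^*$ we have $\langle p,\Theta(u^*)\rangle\le 0=\langle p^*,\Theta(u^*)\rangle$ because $-\Theta(u^*)\in\mathbf{C}$, recovering the left inequality. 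This closes (a).

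For (b) I would run the chain of relations linking $(G+J)(u^*)$, $L(u^*,p^*)$ and $\psi(p^*)$. If $(u^*,p^*)$ is a saddle point, then by (a): (ii) gives primal feasibility of $u^*$, (iii) gives $L(u^*,p^*)=(G+J)(u^*)$, and (i) gives $\psi(p^*)=\min_{v\in\mathbf{U}}L(v,p^*)=L(u^*,p^*)$, hence $\psi(p^*)=(G+J)(u^*)$ --- zero duality gap. Weak duality (for any feasible $u$, $(G+J)(u)\ge (G+J)(u)+\langle p^*,\Theta(u)\rangle=L(u,p^*)\ge\psi(p^*)$, since $\langle p^*,\Theta(u)\rangle\le 0$) then shows $u^*$ is optimal for (P), while the left saddle inequality gives $\psi(p)\le L(u^*,p)\le L(u^*,p^*)=\psi(p^*)$ for all $p\in\mathbf{C}^*$, so $p^*$ is optimal for (D). Conversely, from optimality of $u^*$ and $p^*$ and $(G+J)(u^*)=\psi(p^*)$, I would squeeze $\langle p^*,\Theta(u^*)\rangle$ between $\ge 0$ (because $\psi(p^*)=\min_v L(v,p^*)\le L(u^*,p^*)=(G+J)(u^*)+\langle p^*,\Theta(u^*)\rangle$) and $\le 0$ (feasibility of $u^*$ and $p^*\in\mathbf{C}^*$), obtaining (iii); feasibility is (ii); and then $L(u^*,p^*)=(G+J)(u^*)=\psi(p^*)=\min_v L(v,p^*)$ is (i), so by (a) the pair is a saddle point.

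The two convex-analysis facts I lean on --- that $\mathbf{C}$-convexity of $\Theta$ with $p^*\in\mathbf{C}^*$ makes $\langle p^*,\Theta(\cdot)\rangle$ convex, and that $(\mathbf{C}^*)^*=\mathbf{C}$ for a closed convex cone --- are standard. The only point requiring any care is the passage between the functional form of (i) and its variational-inequality form: since $J$ and $\Phi$ are merely convex (not smooth), this must be argued via one-sided directional derivatives along feasible segments rather than through a subdifferential calculus identity. I do not expect any step to be genuinely difficult.
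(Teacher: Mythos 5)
Your proof is correct. Note, however, that the paper itself gives no proof of Theorem~\ref{theo:Lag}: it is stated as a classical fact of Lagrangian duality for cone-constrained convex programs (in the tradition of Cohen--Zhu), so there is no in-paper argument to compare yours against. Your decomposition (right saddle inequality $\Leftrightarrow$ (i) via convexity of $L(\cdot,p^*)$ and directional derivatives along segments; left saddle inequality $\Leftrightarrow$ (ii)+(iii) via the cone scaling $p=2p^*,\tfrac12 p^*$ and the bipolar identity $(\mathbf{C}^*)^*=\mathbf{C}$; then the weak-duality squeeze for the primal--dual equivalence) is exactly the standard route, and each step, including the careful handling of the nonsmooth terms $J$ and $\langle p^*,\Phi(\cdot)\rangle$ by the chord inequality rather than a subdifferential sum rule, is sound under Assumption~\ref{assump1}.
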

\indent Now we take a trick by introducing slack variables which help problem (P) come back to problem with equality constraints. Namely, the problem (P) is converted into the equivalent problem with equality constraints as follows
\begin{equation*}
\begin{array}{lllllll}
  \mbox{(P$_1$):} &\min\limits_{\xi\in-\mathbf{C}}      & (G+J)(u) \\
              &\rm {s.t} & \Theta(u)-\xi=0 \\
              &          & u\in\mathbf{U}  \\
\end{array}
\end{equation*}
\indent The augmented Lagrangian for this problem is
\begin{equation}\label{L_xi}
\overline{L}_\gamma(u,\xi,p)=(G+J)(u)+\langle p,\Theta(u)-\xi\rangle+\frac{\gamma}{2}\|\Theta(u)-\xi\|^2
\end{equation}
\indent The augmented Lagrangian associated with problem (P) is defined as
\begin{equation}\label{L_gamma}
L_\gamma(u,p)\triangleq\min_{\xi\in-\mathbf{C}}\overline{L}_\gamma(u,\xi,p)=(G+J)(u)+\varphi(\Theta(u),p),
\end{equation}
where $\varphi(\Theta(u),p)=[\|\Pi\big{(}p+\gamma\Theta(u)\big{)}\|^2-\|p\|^2]/2\gamma$ and $\Pi$ is a projection on to $\mathbf{C}^*$.\\
\indent The augmented Lagrangian dual function is as following:
\begin{eqnarray}
\forall p\in \RR^m, \psi_\gamma(p)&=&\min_{u\in\mathbf{U}}L_\gamma(u,p)\nonumber\\
                                  &=&\min_{u\in\mathbf{U}}(G+J)(u)+\varphi(\Theta(u),p).
\end{eqnarray}
\indent Using $\psi_\gamma(p)$, we obtain new primal-dual pair of nonlinear convex cone optimization
\begin{equation*}
\begin{array}{lllllll}
  \mbox{(P):} &\min      & (G+J)(u)                 &\qquad&\mbox{(D$_\gamma$):}    &\max      & \psi_\gamma(p) \\
              &\rm {s.t} & \Theta(u)\in -\mathbf{C} &\qquad&                      &\rm {s.t} & p\in\mathbf{R}^m\\
              &          & u\in\mathbf{U}           &\qquad&                      &          &
\end{array}
\end{equation*}
\indent The following theorem shows that function $\varphi(\theta,p)$, dual function $\psi_\gamma(p)$ and augmented Lagrangian $L_\gamma(u,p)$ have some useful properties.
\begin{theorem}\label{theo:Lag_2}
Suppose Assumption~\ref{assump1} holds for problem (P). Then we have
\begin{itemize}
\item[{\rm(i)}] The function $\varphi(\theta,p)$ is convex in $\theta$ and concave in $p$.
\item[{\rm(ii)}] $\varphi$ is differentiable in $\theta$ and $p$ and one has
\begin{eqnarray*}
&&\nabla_{\theta}\varphi(\theta, p)=\Pi(p+\gamma\theta),\\
&&\nabla_{p}\varphi(\theta, p)=[\Pi(p+\gamma\theta)-p]/\gamma,\\
&&\varphi(\theta, p)=[\|\Pi(p+\gamma\theta)\|^2-\|p\|^2]/2\gamma.
\end{eqnarray*}
\item[{\rm(iii)}] $\psi_\gamma(p)$ is concave and differentiable in $p$, and $\nabla\psi_\gamma(p)=[\Pi(p+\gamma\Theta(\hat{u}(p)))-p]/\gamma$, where $\hat{u}(p)\in\hat{\mathbf{U}}(p)=\{u\in\mathbf{U}|u=\arg\min\limits_{u\in\mathbf{U}}L_\gamma(u,p)\}$.
\item[{\rm(iv)}] $L$ and $L_\gamma$ have the same sets of saddle points $\mathbf{U}^*\times\mathbf{P}^*$ respectively on $\mathbf{U}\times\mathbf{C}^*$ and $\mathbf{U}\times\mathbf{R}^m$.
\item[{\rm(v)}] $L_\gamma$ is stable in $u$, that is $\forall p^*\in \mathbf{P}^*, \hat{\mathbf{U}}(p^*)=\mathbf{U}^*$.
\end{itemize}
\end{theorem}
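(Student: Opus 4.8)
The plan is to trace everything back to two classical facts about projection onto the closed convex cone $\mathbf{C}^{*}$ and to the theory of the Moreau--Yosida regularization of the ordinary dual. Write $\Pi$ for the projection onto $\mathbf{C}^{*}$ and $\Pi_{-\mathbf{C}}$ for the projection onto $-\mathbf{C}$; since the polar of $-\mathbf{C}$ is exactly $\mathbf{C}^{*}$, Moreau's decomposition gives $x=\Pi(x)+\Pi_{-\mathbf{C}}(x)$ with $\langle\Pi(x),\Pi_{-\mathbf{C}}(x)\rangle=0$ and $d(x,-\mathbf{C})=\|\Pi(x)\|$. The first lemma I would record is that $e(x):=\tfrac12\|\Pi(x)\|^{2}=\tfrac12\|x\|^{2}-\tfrac12 d(x,-\mathbf{C})^{2}$ is continuously differentiable with $\nabla e(x)=\Pi(x)$, which is immediate from $\tfrac12 d(\cdot,-\mathbf{C})^{2}$ being the Moreau envelope of the indicator $\iota_{-\mathbf{C}}$. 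Completing the square in \eqref{L_xi}--\eqref{L_gamma} yields the representation
\[
\varphi(\theta,p)=\min_{\xi\in-\mathbf{C}}\Big\{\langle p,\theta-\xi\rangle+\tfrac{\gamma}{2}\|\theta-\xi\|^{2}\Big\}
=\tfrac{\gamma}{2}\,d\big(\theta+p/\gamma,\,-\mathbf{C}\big)^{2}-\tfrac1{2\gamma}\|p\|^{2},
\]
with optimal slack $\hat\xi=\Pi_{-\mathbf{C}}(\theta+p/\gamma)$; by positive homogeneity of $\Pi$ this is also $\varphi(\theta,p)=\tfrac1\gamma e(p+\gamma\theta)-\tfrac1{2\gamma}\|p\|^{2}$.

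Granted this, (i) and (ii) are essentially bookkeeping. For (i): in the min-form, $p\mapsto\varphi(\theta,p)$ is an infimum over $\xi$ of functions affine in $p$, hence concave; and $\theta\mapsto\varphi(\theta,p)$ is the partial minimization over the convex set $-\mathbf{C}$ of the jointly convex map $(\theta,\xi)\mapsto\langle p,\theta-\xi\rangle+\tfrac\gamma2\|\theta-\xi\|^{2}$, hence convex. For (ii): apply the chain rule to $\varphi(\theta,p)=\tfrac1\gamma e(p+\gamma\theta)-\tfrac1{2\gamma}\|p\|^{2}$ using the first lemma; this gives $\nabla_{\theta}\varphi=\Pi(p+\gamma\theta)$, $\nabla_{p}\varphi=[\Pi(p+\gamma\theta)-p]/\gamma$ (equivalently $\nabla_{p}\varphi=\theta-\hat\xi$), while the closed form for $\varphi$ is just its definition. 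I would also note here the elementary inequality $\varphi(\theta,p)\ge\langle p,\theta\rangle$ for $p\in\mathbf{C}^{*}$, with equality iff $\theta\in-\mathbf{C}$ and $\langle p,\theta\rangle=0$ — this drops out of the min-form because the optimal slack obeys $\langle p,\hat\xi\rangle\le0$.

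Part (iii) is the crux. Concavity of $\psi_{\gamma}$ is clear, being a pointwise infimum over $u$ of maps concave in $p$. For differentiability I would establish the identity
\[
\psi_{\gamma}(p)=\sup_{q}\Big\{\psi(q)-\tfrac1{2\gamma}\|q-p\|^{2}\Big\},
\]
obtained by substituting $\tfrac\gamma2\|s\|^{2}+\langle p,s\rangle=\sup_{q}\{\langle q,s\rangle-\tfrac1{2\gamma}\|q-p\|^{2}\}$ with $s=\Theta(u)-\xi$ and interchanging $\min_{u\in\mathbf{U},\,\xi\in-\mathbf{C}}$ with $\sup_{q}$; after the interchange the inner minimization over $\xi\in-\mathbf{C}$ produces the indicator of $\mathbf{C}^{*}$ and the one over $u$ produces $\psi(q)$. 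Thus $\psi_{\gamma}$ is the Moreau envelope of the proper closed concave function $\psi$, hence finite, concave and $C^{1}$ with $\nabla\psi_{\gamma}(p)=\tfrac1\gamma(q_{\gamma}(p)-p)$, $q_{\gamma}(p)$ the unique proximal point; and computing the saddle point of the inner minimax shows $q_{\gamma}(p)=\Pi(p+\gamma\Theta(\hat u(p)))$ for any $\hat u(p)\in\hat{\mathbf{U}}(p)$, the uniqueness of the $q$-component of the saddle point (strict concavity in $q$) showing that this value does not depend on the chosen minimizer. The hard part is exactly this step: $\psi_{\gamma}$ is genuinely differentiable even though $\hat{\mathbf{U}}(p)$ need not be a singleton, and the $\min$--$\sup$ interchange must be justified — I would invoke $\mathbf{C}$-convexity of $\Theta$ together with the constraint qualification and saddle-point existence, Assumption~\ref{assump1}(vi)--(vii). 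A more hands-on alternative is Danskin's theorem applied to $\psi_{\gamma}(p)=\min_{u}L_{\gamma}(u,p)$, for which one needs that $u\mapsto L_{\gamma}(u,p)$ is convex — itself a consequence of the convexity of $d(\cdot,-\mathbf{C})^{2}$ and its monotonicity along $\mathbf{C}$ composed with the $\mathbf{C}$-convex $\Theta$ — and then that all minimizers yield the same $\Pi(p+\gamma\Theta(\hat u))$.

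Finally, (iv) and (v) follow the classical Rockafellar equivalence. If $(u^{*},p^{*})$ is a saddle point of $L$, Theorem~\ref{theo:Lag} gives $\Theta(u^{*})\in-\mathbf{C}$ and $\langle p^{*},\Theta(u^{*})\rangle=0$, so $\varphi(\Theta(u^{*}),p^{*})=0$; the two saddle inequalities for $L_{\gamma}$ then follow from $L_{\gamma}(u,p^{*})\ge L(u,p^{*})$ (the inequality of the second paragraph, using $p^{*}\in\mathbf{C}^{*}$) and from $\varphi(\Theta(u^{*}),p)\le0$ for all $p$. Conversely, if $(u^{*},p^{*})$ is a saddle point of $L_{\gamma}$, then $\nabla\psi_{\gamma}(p^{*})=0$ forces $\Pi(p^{*}+\gamma\Theta(u^{*}))=p^{*}$, i.e. $\gamma\Theta(u^{*})\in N_{\mathbf{C}^{*}}(p^{*})=(-\mathbf{C})\cap\{p^{*}\}^{\perp}$, which is exactly $\Theta(u^{*})\in-\mathbf{C}$ together with $\langle p^{*},\Theta(u^{*})\rangle=0$; moreover the envelope identity forces $\psi(p^{*})=\max\psi=L(u^{*},p^{*})$, so $u^{*}$ minimizes $L(\cdot,p^{*})$ and Theorem~\ref{theo:Lag} applies — hence the saddle-point sets coincide, which is (iv). For (v): if $u^{*}\in\mathbf{U}^{*}$ then $(u^{*},p^{*})$ is a saddle point of $L$, hence of $L_{\gamma}$ by (iv), so $u^{*}\in\hat{\mathbf{U}}(p^{*})$; and for $\hat u\in\hat{\mathbf{U}}(p^{*})$ the chain $L_{\gamma}(\hat u,p^{*})=\psi_{\gamma}(p^{*})=\max\psi=\psi(p^{*})\le L(\hat u,p^{*})\le L_{\gamma}(\hat u,p^{*})$ collapses to equalities, which force equality in $\varphi(\Theta(\hat u),p^{*})\ge\langle p^{*},\Theta(\hat u)\rangle$ and $L(\hat u,p^{*})=\psi(p^{*})$, so $\hat u\in\mathbf{U}^{*}$ by Theorem~\ref{theo:Lag} once more.
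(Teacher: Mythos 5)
The paper itself gives no proof of Theorem~\ref{theo:Lag_2}: it is stated as imported background (the analogous results are proved in Cohen--Zhu~\cite{CohenZ} and Zhao--Zhu~\cite{ZhaoZhu2017}, and the paper defers to those references just as it does for Proposition~\ref{proposition} and Lemma~\ref{lemma:ALBounded}). So there is nothing internal to compare against; judged on its own, your argument is the standard Rockafellar-style proof and is essentially correct and complete. The reduction of $\varphi$ to $\tfrac{1}{\gamma}e(p+\gamma\theta)-\tfrac{1}{2\gamma}\|p\|^2$ via completing the square, the convexity/concavity bookkeeping in (i)--(ii), the identification of $\psi_\gamma$ as the Moreau envelope of $\psi$ in (iii), and the use of Theorem~\ref{theo:Lag} together with the equality case of $\varphi(\theta,p)\ge\langle p,\theta\rangle$ in (iv)--(v) are all sound, and you correctly isolate the two genuinely delicate points: the $\min$--$\sup$ interchange and the independence of $\Pi(p+\gamma\Theta(\hat u))$ from the choice of $\hat u\in\hat{\mathbf{U}}(p)$.

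Two small corrections. First, your opening identity is miswritten: by Moreau's decomposition \eqref{eq:Projecproperty5}--\eqref{eq:Projecproperty6} one has $\tfrac12\|\Pi(x)\|^2=\tfrac12 d(x,-\mathbf{C})^2=\tfrac12\|x\|^2-\tfrac12 d(x,\mathbf{C}^*)^2$, whereas you wrote $\tfrac12\|x\|^2-\tfrac12 d(x,-\mathbf{C})^2$, which equals $\tfrac12\|\Pi_{-\mathbf{C}}(x)\|^2$ instead. The conclusion $\nabla e(x)=\Pi(x)$ is unaffected (differentiate either correct expression), but as stated the lemma does not follow from identifying $\tfrac12 d(\cdot,-\mathbf{C})^2$ as the Moreau envelope of $\iota_{-\mathbf{C}}$ in the way you describe. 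Second, the minimax interchange in (iii) is only sketched; it does need the convexity of $u\mapsto L(u,q)$ for $q\in\mathbf{C}^*$ (hence the $\mathbf{C}$-convexity of $\Theta$) plus the coercivity in $q$ supplied by the quadratic term, and a citation to Rockafellar's 1976 result~\cite{Rock76} or to~\cite{CohenZ} would be the cleanest way to close it. Neither issue affects the validity of the overall argument.
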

\indent Moreover, next lemma will give another property of augmented Lagrangian term.
\begin{lemma}\label{lemma:bound0} For all $p\in\mathbf{C}^*$, $p'\in\RR^m$ and $u\in\RR^n$, we have that
\begin{eqnarray*}
L(u,p)-L_{\gamma}(u,p')\leq\frac{1}{2\gamma}\|p-p'\|^2,
\end{eqnarray*}
or
\begin{eqnarray*}
\langle p,\Theta(u)\rangle-\varphi\big{(}\Theta(u),p'\big{)}\leq\frac{1}{2\gamma}\|p-p'\|^2.
\end{eqnarray*}
\end{lemma}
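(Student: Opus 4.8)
The two displayed inequalities are in fact the same statement: $L(u,p)=(G+J)(u)+\langle p,\Theta(u)\rangle$ and $L_{\gamma}(u,p')=(G+J)(u)+\varphi(\Theta(u),p')$, so cancelling the common term $(G+J)(u)$ turns the first into the second. Hence I would only establish the second inequality,
\[
\langle p,\Theta(u)\rangle-\varphi\big(\Theta(u),p'\big)\le\frac{1}{2\gamma}\|p-p'\|^{2}.
\]

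Write $\theta=\Theta(u)$ and $q=p'+\gamma\theta$. By Theorem~\ref{theo:Lag_2}(ii), $\varphi(\theta,p')=\frac{1}{2\gamma}\big(\|\Pi(q)\|^{2}-\|p'\|^{2}\big)$. Multiplying the target inequality by $2\gamma$ and substituting $2\gamma\langle p,\theta\rangle=2\langle p,q-p'\rangle$ and $\|p-p'\|^{2}=\|p\|^{2}-2\langle p,p'\rangle+\|p'\|^{2}$, every term involving $p'$ alone cancels and the claim reduces to $2\langle p,q\rangle-\|\Pi(q)\|^{2}\le\|p\|^{2}$, i.e.\ to $\|p\|^{2}-2\langle p,q\rangle+\|\Pi(q)\|^{2}\ge0$. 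Since $\|p-\Pi(q)\|^{2}\ge0$ gives $\|p\|^{2}+\|\Pi(q)\|^{2}\ge2\langle p,\Pi(q)\rangle$, it suffices to prove
\[
\langle p,\,q-\Pi(q)\rangle\le0.
\]

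This last inequality is the only place the structure of the feasible cone is used, and it is essentially the Moreau decomposition. Since $\Pi$ is the Euclidean projection onto the closed convex cone $\mathbf{C}^{*}$, the variational inequality for projections reads $\langle z-\Pi(q),\,q-\Pi(q)\rangle\le0$ for all $z\in\mathbf{C}^{*}$. Taking $z=0$ and $z=2\Pi(q)$, both of which lie in $\mathbf{C}^{*}$ by the cone property, forces $\langle\Pi(q),\,q-\Pi(q)\rangle=0$; then taking $z=p$ --- which is legitimate precisely because the hypothesis is $p\in\mathbf{C}^{*}$ --- gives $\langle p,\,q-\Pi(q)\rangle\le\langle\Pi(q),\,q-\Pi(q)\rangle=0$, as required.

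I do not expect a real obstacle here: the only care needed is bookkeeping of the factor $\gamma$ when clearing denominators, and noting that the hypothesis $p\in\mathbf{C}^{*}$ (as opposed to the unrestricted $p'\in\RR^{m}$) is invoked in exactly one spot, namely to feed $p$ into the projection inequality --- which is also the reason $p'$ may range over all of $\RR^{m}$.
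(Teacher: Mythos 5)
Your proposal is correct and follows essentially the same route as the paper: both reduce the claim to the nonnegativity of $\|p\|^{2}-2\langle p,\,p'+\gamma\Theta(u)\rangle+\|\Pi(p'+\gamma\Theta(u))\|^{2}$ and then complete the square against $\Pi(p'+\gamma\Theta(u))$. The only cosmetic difference is in the last step, where the paper invokes the Moreau decomposition $q=\Pi(q)+\Pi_{-\mathbf{C}}(q)$ together with $\langle p,\Pi_{-\mathbf{C}}(q)\rangle\le 0$ for $p\in\mathbf{C}^{*}$, while you derive the equivalent fact $\langle p,\,q-\Pi(q)\rangle\le 0$ directly from the variational characterization of the projection.
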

\begin{proof}
\begin{eqnarray*}
&&L_{\gamma}(u,p')-L(u,p)+\frac{1}{2\gamma}\|p-p'\|^2\nonumber\\
&=&\varphi\big{(}\Theta(u),p'\big{)}-\langle p,\Theta(u)\rangle+\frac{1}{2\gamma}\|p-p'\|^2\nonumber\\
&=&\frac{\|\Pi\big{(}p'+\gamma\Theta(u)\big{)}\|^2-\|p'\|^2}{2\gamma}-\langle p,\Theta(u)\rangle\nonumber\\
&&+\frac{1}{2\gamma}\big{[}\|p'\|^2-2\langle p,p'\rangle+\|p\|^2\big{]}\nonumber\\
&&\qquad\quad\;\mbox{(by expression of $\varphi\left(\Theta(u),p\right)$ in Theorem~\ref{theo:Lag_2})}\nonumber\\
&=&\frac{1}{2\gamma}\big{[}\|\Pi\big{(}p'+\gamma\Theta(u)\big{)}\|^2-2\langle p,p'+\gamma\Theta(u)\rangle+\|p\|^2\big{]}\nonumber\\
&=&\frac{1}{2\gamma}\big{[}\|\Pi\big{(}p'+\gamma\Theta(u)\big{)}\|^2-2\langle p,\Pi\big{(}p'+\gamma\Theta(u)\big{)}\nonumber\\
&&+\Pi_{-\mathbf{C}}\big{(}p'+\gamma\Theta(u)\big{)}\rangle+\|p\|^2\big{]}\qquad\qquad\qquad\mbox{(by~\eqref{eq:Projecproperty5})}\nonumber\\
&=&\frac{1}{2\gamma}\|\Pi\big{(}p'+\gamma\Theta(u)\big{)}-p\|^2-\frac{1}{\gamma}\langle p,\Pi_{-\mathbf{C}}\big{(}p'+\gamma\Theta(u)\big{)}\rangle\nonumber\\
&\geq&0.\qquad\qquad\mbox{(by $\langle p,\Pi_{-\mathbf{C}}\big{(}p'+\gamma\Theta(u)\big{)}\rangle\leq0$, $\forall p\in\mathbf{C}^*$)}
\end{eqnarray*}
\end{proof}
\indent For the general COCC, the augmented Lagrangian method is an approach which can overcome the instability and nondifferentiability of the dual function of the Lagrangian. Furthermore, the augmented Lagrangian of a constrained convex program has the same solution set as the original constrained convex program. The augmented Lagrangian approach for equality-constrained optimization problems was introduced in Hestenes~\cite{Hestenes1969} and Powell~\cite{Powell1969}, and then extended to inequality-constrained problems by Buys~\cite{Buys1972}.\\
\indent Although the augmented Lagrangian approach (Uzawa algorithm) has several advantages, it does not preserve separability, even when the initial problem is separable. One way to decompose the augmented Lagrangian is ADMM (Fortin and Glowinski~\cite{ADMM1983}). ADMM can only handle convex problems with linear constraints and is not easily parallelizable. Another way to overcome this difficulty is the Auxiliary Problem Principle of augmented Lagrangian methods (APP-AL) (Cohen and Zhu~\cite{CohenZ}), which is a fairly general first-order primal-dual decomposition method based on linearization of the augmented Lagrangian in nonlinear convex cone programming with separable or nonseparable, smooth or nonsmooth constraints. Zhao and Zhu (2018)~\cite{ZhaoZhu2017} extend Cohen and Zhu (1984)~\cite{CohenZ}'s work to propose first-order primal-dual augmented Lagrangian methods for COCC as an algorithm (VAPP).\\
\noindent\rule[0.25\baselineskip]{0.5\textwidth}{1.5pt}
{\bf Variant Auxiliary Problem Principle for solving COCC (VAPP)}\\
\noindent\rule[0.25\baselineskip]{0.5\textwidth}{0.5pt}
{Initialize} $u^0 \in \mathbf{U}$ and $p^0\in \mathbf{C^*}$  \\
 \textbf{for} $k = 0,1,\cdots $, \textbf{do}
\begin{eqnarray*}
&&u^{k+1}\leftarrow\min_{u\in \mathbf{U}}\langle\nabla G(u^{k}), u \rangle + J(u)+ \langle\Pi_M(p^k+\gamma\Theta(u^k)),\nonumber\\
&&\qquad\qquad\qquad\qquad\qquad\nabla\Omega(u^k)u+\Phi(u)\rangle+\frac{1}{\epsilon^k}D(u,u^k);\label{primal_APP}\\
&&p^{k+1}\leftarrow\Pi_M\big{(}p^k+\gamma\Theta(u^{k+1})\big{)}.\label{dual_APP}
\end{eqnarray*}
\textbf{end for}\\
\noindent\rule[0.25\baselineskip]{0.5\textwidth}{1.5pt}
\indent where $D(u,v)=K(u)-K(v)-\langle\nabla K(v),u-v\rangle$ is a Bregman like function with $K$ is strongly convex and gradient Lipschitz. Zhao and Zhu (2018) shows the sequence $\{(u^k,p^k)\}$ generated by VAPP convergence to $(u^*,p^*)$ saddle point of $L$ over $\mathbf{U}\times\mathbf{C}^*$. Moreover, an $O(1/t)$ convergence rate is also proposed. In the era of big data, there has been a surge of interest in redesign of VAPP suitable for solving the huge optimization with available computing performance.
\subsection{The properties of projection on convex cone}
In this subsection, we introduce some properties of projection on convex sets (resp. convex cone) as preparations. These properties are used in the following sections.\\
\indent Let $\mathcal{S}$ be a nonempty closed convex set of $\RR^m$. For $x\in\RR^m$, we propose the projection $\Pi_{\mathcal{S}}(x)$ as a projection on $\mathcal{S}$. Then $\Pi_{\mathcal{S}}(x)$ is characterized by the following two conditions~\cite{Projectiononconvexsets}:
\begin{eqnarray}
&(i)&\qquad\langle y-\Pi_{\mathcal{S}}(x), x-\Pi_{\mathcal{S}}(x)\rangle\leq0, \forall y\in\mathcal{S};\label{eq:Projecproperty3}\\
&(ii)&\qquad\|\Pi_{\mathcal{S}}(x)-\Pi_{\mathcal{S}}(y)\|\leq\|x-y\|.\label{eq:Projecproperty4}
\end{eqnarray}
\indent Furthermore, the following proposition gives another property of projection operator which is used for convergence and convergence rate analysis.
\begin{proposition}\label{proposition}
\noindent For any $(x,y,z)\in\RR^{m\times m\times m}$, the projection operator $\Pi_{\mathcal{S}}$ satisfies
\begin{eqnarray}\label{eq:Projecproperty7}
&&2\langle\Pi_{\mathcal{S}}(z+x)-\Pi_{\mathcal{S}}(z+y),x\rangle\\
&\leq&\|x-y\|^2+\|\Pi_{\mathcal{S}}(z+x)-z\|^2-\|\Pi_{\mathcal{S}}(z+y)-z\|^2.\nonumber
\end{eqnarray}
\end{proposition}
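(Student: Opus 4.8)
The plan is to reduce the three-term inequality \eqref{eq:Projecproperty7} to the basic characterization \eqref{eq:Projecproperty3} of the projection, using the firm nonexpansiveness of $\Pi_{\mathcal S}$ implicitly through an algebraic identity. Write $a=\Pi_{\mathcal S}(z+x)$ and $b=\Pi_{\mathcal S}(z+y)$ for brevity. The key observation is the polarization-type identity
\begin{equation*}
\|a-z\|^2-\|b-z\|^2 = \|a-b\|^2 + 2\langle a-b, b-z\rangle,
\end{equation*}
so that proving \eqref{eq:Projecproperty7} is equivalent to showing
\begin{equation*}
2\langle a-b, x\rangle \le \|x-y\|^2 + \|a-b\|^2 + 2\langle a-b, b-z\rangle,
\end{equation*}
i.e., after moving terms,
\begin{equation*}
2\langle a-b, x-(b-z)\rangle - \|a-b\|^2 \le \|x-y\|^2,
\end{equation*}
that is $2\langle a-b,\,(z+x)-b\rangle-\|a-b\|^2\le\|x-y\|^2$.

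**Main steps.** First I would invoke \eqref{eq:Projecproperty3} at the point $z+x$ with test vector $y'=b\in\mathcal S$: this gives $\langle b-a,\,(z+x)-a\rangle\le 0$, equivalently $\langle a-b,\,(z+x)-a\rangle\ge 0$. Adding and subtracting, $\langle a-b,\,(z+x)-b\rangle = \langle a-b,(z+x)-a\rangle + \|a-b\|^2 \ge \|a-b\|^2$; but I actually want an \emph{upper} bound, so instead I would use \eqref{eq:Projecproperty3} at $z+y$ with test vector $a\in\mathcal S$: $\langle a-b,\,(z+y)-b\rangle\le 0$. Then
\begin{equation*}
\langle a-b,\,(z+x)-b\rangle = \langle a-b,\,(z+y)-b\rangle + \langle a-b,\,x-y\rangle \le \langle a-b,\,x-y\rangle.
\end{equation*}
Hence $2\langle a-b,\,(z+x)-b\rangle-\|a-b\|^2 \le 2\langle a-b, x-y\rangle - \|a-b\|^2 \le \|x-y\|^2$, where the last inequality is just $2\langle s,t\rangle-\|s\|^2\le\|t\|^2$ (equivalently $\|s-t\|^2\ge0$) with $s=a-b$, $t=x-y$. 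Unwinding the polarization identity gives exactly \eqref{eq:Projecproperty7}.

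**Anticipated obstacle.** There is no deep obstacle here; the whole argument is a two-line manipulation once the right identity is in place. The only thing requiring a little care is choosing which of the two projection inequalities (at $z+x$ versus at $z+y$) to apply and with which test point, so that the cross term comes out as $\langle a-b, x-y\rangle$ rather than something that cannot be absorbed. Everything else — the polarization identity and the elementary bound $2\langle s,t\rangle\le\|s\|^2+\|t\|^2$ — is routine. I would present the proof by starting from the right-hand side of \eqref{eq:Projecproperty7}, applying the polarization identity to the last two squared norms, then bounding the inner product via \eqref{eq:Projecproperty3} at $z+y$, and collecting terms.
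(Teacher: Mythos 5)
Your argument is correct and complete. Writing $a=\Pi_{\mathcal S}(z+x)$, $b=\Pi_{\mathcal S}(z+y)$, the identity $\|a-z\|^2-\|b-z\|^2=\|a-b\|^2+2\langle a-b,b-z\rangle$ reduces the claim to $2\langle a-b,(z+x)-b\rangle-\|a-b\|^2\le\|x-y\|^2$, the variational characterization \eqref{eq:Projecproperty3} at $z+y$ with test point $a\in\mathcal S$ gives $\langle a-b,(z+y)-b\rangle\le 0$, and the elementary bound $2\langle s,t\rangle\le\|s\|^2+\|t\|^2$ finishes it; each step checks out. Note that the paper itself does not prove this proposition --- it only cites Zhao and Zhu (2017) --- so your self-contained two-line derivation is a perfectly good substitute; the brief false start (applying \eqref{eq:Projecproperty3} at $z+x$ before switching to $z+y$) is harmless since you discard it, but it could simply be omitted from a final write-up.
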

\begin{proof} See~\cite{ZhaoZhu2017}.
\end{proof}
\indent Next, we consider the properties for projection on convex cone. Let $\mathbf{C}$ be a nonempty closed convex cone in $\mathbf{R}^m$ with vertex at the origin. $\mathbf{C}^{*}$ denotes the conjugate cone. Let $\Pi$ denote the projection on $\mathbf{C}^*$ and $\Pi_{-\mathbf{C}}$ denote the projection on $-\mathbf{C}$. The projection is characterized by the following conditions. (see Wierzbicki~\cite{Projection}):
\begin{eqnarray}
&(iii)  &\qquad y=\Pi(y)+\Pi_{-\mathbf{C}}(y), y\in\mathbf{R}^m;\label{eq:Projecproperty5}\\
&(iv) &\qquad \langle\Pi(y), \Pi_{-\mathbf{C}}(y)\rangle=0, y\in\mathbf{R}^m.\label{eq:Projecproperty6}
\end{eqnarray}
\subsection{The properties of differentiable functions and mappings}
\begin{lemma}\label{lemma:Lipschitz3point}
\noindent Let the function $f$ be convex and differentiable on $\mathbf{U}$.\\
{\rm(i)} If $f$ is strongly convex with constant $\beta_f$, then
\begin{equation}\label{LB}
\forall u,v\in \mathbf{U}, f(u)-f(v)\geq\langle\nabla f(v),u-v\rangle+\frac{\beta_f}{2}\|u-v\|^2.
\end{equation}
{\rm(ii)} If the derivative of $f$ is Lipschitz with constant $B_f$, then
\begin{equation}\label{UB}
\forall u,v\in \mathbf{U}, f(u)-f(v)\leq \langle\nabla f(v),u-v\rangle+\frac{B_f}{2}\|u-v\|^2,
\end{equation}
{\rm(iii)} Let $\Omega$ be a $\mathbf{C}$-convex mapping from $\mathbf{U}$ to $\mathbf{C}$. Suppose its derivative exists and meets the following condition: $\exists T\in\mathbf{C}$ such that
\begin{equation}\label{UB-1}
\forall u,v\in \mathbf{U}, \langle\nabla\Omega(u)-\nabla\Omega(v),u-v\rangle-\|u-v\|^2T\in-\mathbf{C},
\end{equation}
then $\forall u,v\in \mathbf{U}, \forall p\in\mathbf{C}^*$ we have
\begin{equation}\label{UB-2}
\langle p,\Omega(u)-\Omega(v)\rangle\leq\langle p,\nabla\Omega(v)(u-v)\rangle+\frac{\|p\|\cdot\|u-v\|^2}{2}T.
\end{equation}
\end{lemma}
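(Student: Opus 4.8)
The plan is to reduce each statement to a one-dimensional estimate along the segment $[v,u]$, which lies in $\mathbf U$ by convexity of $\mathbf U$. For \textbf{(i)} and \textbf{(ii)} I would start from the fundamental theorem of calculus,
\[
f(u)-f(v)=\langle\nabla f(v),u-v\rangle+\int_0^1\big\langle\nabla f(v+t(u-v))-\nabla f(v),\,u-v\big\rangle\,dt .
\]
For \eqref{LB}, strong convexity of $f$ with modulus $\beta_f$ is equivalent to $\beta_f$-strong monotonicity of $\nabla f$ (or, cleanly, to convexity of $f-\tfrac{\beta_f}{2}\|\cdot\|^2$); applying it to the pair $\bigl(v+t(u-v),v\bigr)$ bounds the integrand below by $\beta_f t\|u-v\|^2$, and integrating over $t\in[0,1]$ yields the term $\tfrac{\beta_f}{2}\|u-v\|^2$. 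For \eqref{UB}, Cauchy--Schwarz together with the $B_f$-Lipschitz property of $\nabla f$ bounds the integrand above by $B_f t\|u-v\|^2$, and the same integration produces $\tfrac{B_f}{2}\|u-v\|^2$. Both are standard.

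The substantive part is \textbf{(iii)}. I would write the analogous vector identity for the $\mathbf C$-convex mapping $\Omega$,
\[
\Omega(u)-\Omega(v)=\int_0^1\nabla\Omega(v+t(u-v))(u-v)\,dt ,
\]
pair it with $p\in\mathbf C^*$ and split off the $t$-independent term to get
\[
\langle p,\Omega(u)-\Omega(v)\rangle=\langle p,\nabla\Omega(v)(u-v)\rangle+\int_0^1\big\langle p,\bigl(\nabla\Omega(v+t(u-v))-\nabla\Omega(v)\bigr)(u-v)\big\rangle\,dt .
\]
The key step is to estimate the integrand. Hypothesis \eqref{UB-1}, read as a statement about the vector $(\nabla\Omega(w)-\nabla\Omega(v))(w-v)\in\RR^m$ and applied with $w=v+t(u-v)$, gives $t^2\|u-v\|^2\,T-\bigl(\nabla\Omega(v+t(u-v))-\nabla\Omega(v)\bigr)\,t(u-v)\in\mathbf C$; since $\mathbf C$ is a cone I may divide by $t>0$, and since $p\in\mathbf C^*$ I may take the (nonnegative) inner product with $p$, obtaining $\big\langle p,\bigl(\nabla\Omega(v+t(u-v))-\nabla\Omega(v)\bigr)(u-v)\big\rangle\le t\|u-v\|^2\langle p,T\rangle$. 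Integrating over $t\in[0,1]$ supplies the factor $\tfrac12$, which yields $\langle p,\Omega(u)-\Omega(v)\rangle\le\langle p,\nabla\Omega(v)(u-v)\rangle+\tfrac12\|u-v\|^2\langle p,T\rangle$, and Cauchy--Schwarz with $T\in\mathbf C$ finishes via $\langle p,T\rangle\le\|p\|\,\|T\|$, matching \eqref{UB-2}.

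The main obstacle is entirely in \textbf{(iii)}: correctly interpreting \eqref{UB-1} as a cone membership for the $\RR^m$-valued quantity $(\nabla\Omega(u)-\nabla\Omega(v))(u-v)$, using that $\mathbf C$ is a cone to rescale the displacement $t(u-v)$ to $(u-v)$, and using $p\in\mathbf C^*$ to convert cone membership into the scalar inequality that can be integrated. A minor bookkeeping point is that the trailing $T$ on the right-hand side of \eqref{UB-2} should be read as $\|T\|$ for dimensional consistency, which is exactly what the final Cauchy--Schwarz step delivers. A further small technicality is the need for $\Omega$ (resp. $f$) to be differentiable along the whole segment $[v,u]$, which is guaranteed by convexity of $\mathbf U$ and the differentiability hypotheses in Assumption~\ref{assump1}.
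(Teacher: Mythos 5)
Your proof is correct. The paper offers no argument for this lemma at all---it cites Zhu and Marcotte for (i)--(ii) and Cohen (1980) for (iii)---and the line-integral argument you give (reducing to the segment $[v,u]$, reading the hypothesis in (iii) as a cone membership for the $\RR^m$-valued vector $(\nabla\Omega(w)-\nabla\Omega(v))(w-v)$, rescaling by $t>0$ using the cone property, pairing with $p\in\mathbf{C}^*$, and integrating) is exactly the standard proof found in those references, so there is nothing to contrast. Your reading of the trailing $T$ in the conclusion of (iii) as $\|T\|$ (equivalently, retaining $\langle p,T\rangle$ and finishing with Cauchy--Schwarz) is the right repair of what is indeed a scalar/vector mismatch in the statement as printed, and it is consistent with how the paper later uses the bound, e.g.\ the scalar terms $\|q^k\|\cdot T$ appearing in the step-size rule and in the proof of Lemma~\ref{lemma:bound1}.
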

\begin{proof}
The statements (i) and (ii) are classical; the proof is omitted (see Zhu and Marcotte~\cite{Zhu96}). For proof of (iii), see Cohen~\cite{Cohen80}.
\end{proof}
\section{Stochastic primal-dual coordinate method}\label{SPDC}
In this section, we propose a stochastic primal-dual coordinate descent algorithm to solve (P). Firstly, we introduce the core function $K(\cdot)$ satisfying the following assumption:
\begin{assumption}\label{assump2}
\rm{
$K$ is strongly convex with parameter $\beta$ and differentiable with its gradient Lipschitz continuous with parameter $B$ on $\mathbf{U}$.
}
\end{assumption}
\indent Additionally, let $D(u,v)=K(u)-K(v)-\langle\nabla K(v),u-v\rangle$ is a Bregman like function (core function)~\cite{Mirror, CohenZ}. From Assumption~\ref{assump2} we have: $\frac{\beta}{2}\|u-v\|^2\leq D(u,v)\leq\frac{B}{2}\|u-v\|^2$.\\
\indent Moreover, we assume that the parameter $\rho$ satisfy:
\begin{equation}\label{para-choice-convergence}
\rho=\frac{\gamma}{2N-1}.
\end{equation}
Let $\mu_0$ be a bound of dual optimal solution of (P), denote $\mu=\mu_0+1$. Let $\mathfrak{B}_{\mu} = \{p|\|p\|\leq\mu\}$. The estimation of $\mu_0$ can be found in~\cite{ZhaoZhu2017}. By using the projection $\mathcal{P}_\mu(\cdot)$ onto $\mathfrak{B}_{\mu}$, we introduce Stochastic Primal-Dual Coordinate Method with Large step size (SPDCL) for solving (P):\\
\noindent\rule[0.25\baselineskip]{0.5\textwidth}{1.5pt}
{\bf Stochastic Primal-Dual Coordinate Method with Large step size (SPDCL)}\\
\noindent\rule[0.25\baselineskip]{0.5\textwidth}{0.5pt}
{Initialize} $u^0 \in \mathbf{U}$, $p^0\in \mathbf{R}^m$ and $\epsilon^{-1}>0$  \\
\textbf{for} $k = 0,1,\cdots $, \textbf{do}
\begin{eqnarray}
&&\mbox{Set $\epsilon^k=\min\{\epsilon^{k-1},\frac{\beta}{2(B_G+\|q^k\|\cdot T+\gamma\tau^2)}\}$}\nonumber\\
&&\mbox{Choose $i(k)$ from $\{1,2,\ldots,N\}$ with equal probability}\nonumber\\
&&u^{k+1}\leftarrow\min_{u\in \mathbf{U}}\langle\nabla_{i(k)} G(u^{k}), u_{i(k)} \rangle + J_{i(k)}(u_{i(k)})\nonumber\\
&&+\langle\Pi(p^k+\gamma\Theta(u^k)),\nabla_{i(k)}\Omega(u^k)u_{i(k)}+\Phi_{i(k)}(u_{i(k)})\rangle\nonumber\\
&&\qquad\qquad\qquad\qquad\qquad\qquad\qquad\quad+\frac{1}{\epsilon^k}D(u,u^k);\label{primal}\\
&&p^{k+1}\leftarrow\mathcal{P}_\mu\bigg{(}p^{k}+\frac{\rho}{\gamma}\big{(}\Pi(p^k+\gamma\Theta(u^{k+1}))-p^k\big{)}\bigg{)},\qquad\label{dual}
\end{eqnarray}
\textbf{end for}\\
\noindent\rule[0.25\baselineskip]{0.5\textwidth}{1.5pt}
For the sake of brevity, let us set that $q^k=\Pi\big{(}p^k+\gamma\Theta(u^k)\big{)}$, $q^{k+1/2}=\Pi\big{(}p^k+\gamma\Theta(u^{k+1})\big{)}$ and $F=G+J$. Then the primal problem of algorithm can be expressed as
\begin{eqnarray}
\mbox{(AP$^k$)}\qquad\min_{u\in \mathbf{U}}\langle\nabla_{i(k)} G(u^{k}), u_{i(k)}\rangle + J_{i(k)}(u_{i(k)})\nonumber\\
+\langle q^k,\nabla_{i(k)}\Omega(u^k)u_{i(k)}+\Phi_{i(k)}(u_{i(k)})\rangle\nonumber\\
+\frac{1}{\epsilon^k}\big{[}K(u)-\langle\nabla K(u^k),u\rangle\big{]}.\label{eq:APk}
\end{eqnarray}
If we choose an additive Bregman like function (or core function) respect to the space decomposition~\eqref{spacedecomposition} that is
$$K(u)=\sum_{i=1}^NK_i(u_i).$$
Then problem (AP$^k$) is just a small optimization problem for selected block $i(k)$. Specifically, taking $K(u)=\sum\limits_{i=1}\limits^{N}\frac{\|u_i\|^2}{2}$ for (AP$^k$), we perform only a block proximal gradient update for block $i(k)$, where we linearize the coupled function $G(u)$ and augmented Lagrangian term $\varphi(\Theta(u),p)$ and add the proximal term to it. In the following sections, we will establish the convergence and convergence rate and probability complexity bounds of SPDCL.
\section{Convergence analysis}\label{convergence}
In this section, we will establish results about convergence of SPDCL. Before proceeding, we first give the generalized equilibrium reformulation of saddle point formulation~\eqref{saddle point:L}:\\
Find $(u^*, p^*)\in\mathbf{U}\times\mathbf{C}^*$ such that
\begin{equation}\label{VIS}
\mbox{(EP):}\qquad L(u^*, p)-L(u, p^*)\leq 0, \forall u\in\mathbf{U}, p\in\mathbf{C}^*.
\end{equation}
Obviously, bifunction $L(u',p)-L(u,p')$ is convex in $u'$ and linear in $p'$ for given $u\in\mathbf{U}$, $p\in\mathbf{C}^*$.\\
In algorithm SPDCL, the indices $i(k)$, $k=0,1,2,\ldots$ are random variables. After $k$ iterations, SPDCL method generates a random output $(u^{k+1}, p^{k+1})$. We denote by $\mathcal{F}_k$ is a filtration generated by the random variable $i(0),i(1),\ldots,i(k)$, i.e.,
$$\mathcal{F}_{k}\overset{def}{=}\{i(0),i(1),\ldots,i(k)\}, \mathcal{F}_{k}\subset\mathcal{F}_{k+1}.$$
Additionaly, we define that $\mathcal{F}=(\mathcal{F}_{k})_{k\in\mathbb{N}}$,  $\mathbb{E}_{\mathcal{F}_{k+1}}=\mathbb{E}(\cdot|\mathcal{F}_{k})$ is the condition expectation w.r.t. $\mathcal{F}_{k}$ and the condition expectation in term of $i(k)$ given $i(0),i(1),\ldots,i(k-1)$ as $\mathbb{E}_{i(k)}$. \\
Knowing $\mathcal{F}_{k-1}=\{i(0),i(1),\ldots,i(k-1)\}$, we have:
\begin{eqnarray}
&&\mathbb{E}_{i(k)}\langle\nabla_{i(k)}G(u^{k}),(u^{k}-u)_{i(k)}\rangle\nonumber\\
&=&\frac{1}{N}\langle\nabla G(u^{k}),u^{k}-u\rangle\geq\frac{1}{N}\big{[}G(u^k)-G(u)\big{]};\label{expectationG}\\ &&\mathbb{E}_{i(k)}\big{[}J_{i(k)}(u_{i(k)}^{k})-J_{i(k)}(u_{i(k)})\big{]}\nonumber\\
&=&\frac{1}{N}\big{[}J(u^{k})-J(u)\big{]};\label{expectationJ}\\
&&\mathbb{E}_{i(k)}\langle q^k,\nabla_{i(k)}\Omega(u^k)(u^{k}-u)_{i(k)}\rangle\nonumber\\
&=&\frac{1}{N}\langle q^k,\nabla\Omega(u^k)(u^{k}-u)\rangle\nonumber\\
&\geq&\frac{1}{N}\langle q^k,\Omega(u^k)-\Omega(u)\rangle.\label{expectationP_1}\\
&&\mathbb{E}_{i(k)}\langle q^k,\Phi_{i(k)}(u_{i(k)}^{k})-\Phi_{i(k)}(u_{i(k)})\rangle\nonumber\\
&=&\frac{1}{N}\langle q^k,\Phi(u^{k})-\Phi(u)\rangle.\label{expectationP_2}
\end{eqnarray}
Given $(u^*,p^*)$, for any $u,u'\in\mathbf{U}$ and $p,p'\in\mathbf{C}^*$, we construct the following function:
\begin{eqnarray*}
&&\Lambda^k(u,p,u',p')\nonumber\\
&=&D(u,u')+\frac{\epsilon^k}{2N\rho}\|p-p'\|^2+\frac{(N-1)\epsilon^k}{N}\big{[}L_{\gamma}(u',p')\nonumber\\
&&-L(u^*,p^*)\big{]}\nonumber\\
&=&D(u,u')+\frac{\epsilon^k}{2N\rho}\|p-p'\|^2+\frac{(N-1)\epsilon^k}{N}\big{[}L_{\gamma}(u',p')\nonumber\\
&&-L(u',p^*)+L(u',p^*)-L(u^*,p^*)\big{]}\nonumber\\
&\geq&D(u,u')+\frac{\epsilon^k}{2N\rho}\|p-p'\|^2-\frac{(N-1)\epsilon^k}{2N\gamma }\|p^*-p'\|^2\\
&&\qquad\qquad\qquad\qquad\qquad\mbox{(since Lemma~\ref{lemma:bound0} and~\eqref{saddle point:L})}.
\end{eqnarray*}
Specifically, we can show the function value of $\Lambda^k$ at ($u^*,p^*$) provides an upper bound for $\|u'-u^*\|^2$.
\begin{eqnarray}\label{eq:Lambda_2}
&&\Lambda^k(u^*,p^*,u',p')\nonumber\\
&\geq&D(u^*,u')+\frac{\epsilon^k}{2N\rho}\|p^*-p'\|^2-\frac{(N-1)\epsilon^k}{2N\gamma }\|p^*-p'\|^2\nonumber\\
&\geq&D(u^*,u')+\frac{\epsilon^k}{2\gamma}\|p^*-p'\|^2\nonumber\\
&\geq&\frac{\beta}{2}\|u^*-u'\|^2.
\end{eqnarray}
Additionally, since the SPDCL scheme guarantee that $\epsilon^{k+1}\leq\epsilon^k$, we have that
\begin{eqnarray}\label{eq:Lambda}
&&\Lambda^k(u^*,p^*,u',p')\nonumber\\
&=&D(u^*,u')+\frac{\epsilon^k}{2N\rho}\|p^*-p'\|^2+\frac{(N-1)\epsilon^k}{N}\big{[}L_{\gamma}(u',p')\nonumber\\
&&-L(u',p^*)+L(u',p^*)-L(u^*,p^*)\big{]}\nonumber\\
&=&D(u^*,u')+\frac{\epsilon^k}{2\gamma}\|p^*-p'\|^2+\frac{(N-1)\epsilon^k}{N}\big{[}\frac{1}{2\gamma}\|p^*-p'\|^2\nonumber\\
&&+L_{\gamma}(u',p')-L(u',p^*)+L(u',p^*)-L(u^*,p^*)\big{]}\nonumber\\
&\geq&D(u^*,u')+\frac{\epsilon^{k+1}}{2\gamma }\|p^*-p'\|^2+\frac{(N-1)\epsilon^{k+1}}{N}\big{[}\frac{1}{2\gamma}\|p^*-p'\|^2\nonumber\\
&&+L_{\gamma}(u',p')-L(u',p^*)+L(u',p^*)-L(u^*,p^*)\big{]}\nonumber\\
&&\mbox{(the last term is nonegtive by Lemma~\ref{lemma:bound0} and inequality~\eqref{saddle point:L})}\nonumber\\
&=&D(u^*,u')+\frac{\epsilon^{k+1}}{2N\rho}\|p^*-p'\|^2\nonumber\\
&&+\frac{(N-1)\epsilon^{k+1}}{N}\big{[}L_{\gamma}(u',p')-L(u^*,p^*)\big{]}\nonumber\\
&=&\Lambda^{k+1}(u^*,p^*,u',p')
\end{eqnarray}
Before the convergence analysis, we need the following lemma.
\begin{lemma}\label{lemma:bound1} {\bf (Global estimation of bifunction values)}
Let Assumption~\ref{assump1} and~\ref{assump2} hold, $\{(u^k,p^k)\}$ is generated by SPDCL, the parameter $\rho$  satisfy~\eqref{para-choice-convergence}. For all $u\in\mathbf{U}$ and $p\in\mathbf{C}^*\cap\mathfrak{B}_{\mu}$, $(u,p)$ could possibly be random, it holds that\\
\begin{eqnarray*}
\mbox{{\rm(i)}}&&\frac{\epsilon^k}{N}\mathbb{E}_{i(k)}\big{[}L(u^{k+1},q^k)-L(u,q^k)\big{]}\\
               &\leq&\Lambda^k(u,p,u^k,p^k)-\mathbb{E}_{i(k)}\Lambda^k(u,p,u^{k+1},p^{k+1})\\
               &&-\frac{\epsilon^k}{2N\rho}\left[\|p-p^k\|^2-\mathbb{E}_{i(k)}\|p-p^{k+1}\|^2\right]\\
               &&+\mathbb{E}_{i(k)}\big{[}\frac{(N-1)\rho\epsilon^k}{N\gamma^2}\|q^{k+1/2}-p^k\|^2\\
               &&-\frac{\beta-\epsilon^k(B_G+\|q^k\|T+\frac{N-1}{N}\gamma\tau^2)}{2}\|u^k-u^{k+1}\|^2\big{]};\\
\mbox{{\rm(ii)}}&&\frac{\epsilon^k}{N}\mathbb{E}_{i(k)}\big{[}L(u^{k+1},p)-L(u^{k+1},q^{k})\big{]}\\
&\leq&\frac{\epsilon^k}{2N\rho}\big{[}\|p-p^k\|^2-\mathbb{E}_{i(k)}\|p-p^{k+1}\|^2\big{]}\\
&&+\mathbb{E}_{i(k)}\big{[}\frac{(1-N)\rho\epsilon^k}{N\gamma^{2}}\|q^{k+1/2}-p^k\|^2\nonumber\\
&&-\frac{\epsilon^k}{2N\gamma}\|q^k-p^k\|^2+\frac{\epsilon^k\frac{1}{N}\gamma\tau^2}{2}\|u^k-u^{k+1}\|^2\big{]};
\end{eqnarray*}
\begin{eqnarray*}
\mbox{{\rm(iii)}}&&\frac{\epsilon^k}{N}\mathbb{E}_{i(k)}\big{[}L(u^{k+1},p)-L(u,q^{k})\big{]}\\
&\leq&\Lambda^k(u,p,u^k,p^k)-\mathbb{E}_{i(k)}\Lambda^k(u,p,u^{k+1},p^{k+1})\\
&&-\mathbb{E}_{i(k)}\big{[}\frac{\beta}{4}\|u^k-u^{k+1}\|^2-\frac{\epsilon^k}{2N\gamma}\|q^k-p^k\|^2\big{]};\\
\mbox{{\rm(iv)}}&&\frac{1}{N}\big{[}L(u^{k},p)-L(u,q^{k})\big{]}\\
&\leq&\frac{B}{2\epsilon^k}\mathbb{E}_{i(k)}\|u^k-u^{k+1}\|^2\\
&&+\mathbb{E}_{i(k)}h_1(\epsilon^k,u,p,u^k,u^{k+1},q^k)\|u^k-u^{k+1}\|\\
&&+\mathbb{E}_{i(k)}h_2(p,p^k,p^{k+1})\|p^{k+1}-p^k\|,
\end{eqnarray*}
where\\
$h_1(\epsilon^k,u,p,u^k,u^{k+1},q^k)=\frac{B}{\epsilon^k}\|u-u^{k+1}\|+[\|\nabla G(u^k)\|+c_1\|u^k\|+c_2+\tau \|q^k\|]+\frac{\tau}{N}\|p-q^k\|$\\
and\\
$h_2(p,p^k,p^{k+1})=\frac{1}{2N\rho}\|2p-p^{k+1}-p^k\|$.
\end{lemma}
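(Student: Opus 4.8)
The four inequalities in Lemma~\ref{lemma:bound1} all flow from the optimality condition of the primal subproblem (AP$^k$) for the randomly selected block $i(k)$, combined with the strong convexity / gradient-Lipschitz estimates of Lemma~\ref{lemma:Lipschitz3point}, the cone-projection identities in Proposition~\ref{proposition} and \eqref{eq:Projecproperty5}--\eqref{eq:Projecproperty6}, and the augmented-Lagrangian bound in Lemma~\ref{lemma:bound0}. I would organize the argument as follows.

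\emph{Step 1 (first-order optimality of the block subproblem).} Since $u^{k+1}$ minimizes (AP$^k$) over $\mathbf{U}$ and $u^{k+1}$ differs from $u^k$ only in block $i(k)$, I would write the variational inequality: for all $u\in\mathbf{U}$,
\[
\langle\nabla_{i(k)}G(u^k),(u^{k+1}-u)_{i(k)}\rangle+J_{i(k)}(u^{k+1}_{i(k)})-J_{i(k)}(u_{i(k)})
+\langle q^k,\nabla_{i(k)}\Omega(u^k)(u^{k+1}-u)_{i(k)}+\Phi_{i(k)}(u^{k+1}_{i(k)})-\Phi_{i(k)}(u_{i(k)})\rangle
\le \tfrac1{\epsilon^k}\big[\langle\nabla K(u^k),u-u^{k+1}\rangle+\langle\nabla K(u^{k+1}),u^{k+1}-u\rangle\big].
\]
The right-hand side is a difference of Bregman terms, $\tfrac1{\epsilon^k}\big[D(u,u^k)-D(u,u^{k+1})-D(u^{k+1},u^k)\big]$, using the three-point identity for $D$. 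I would then pass from the linearized pieces back to the true functions: convexity of $G$ and $J$ on block $i(k)$, $\mathbf{C}$-convexity of $\Omega,\Phi$ (inequality \eqref{UB-2} of Lemma~\ref{lemma:Lipschitz3point} for the $\Omega$ term, Jensen-type convexity for $\Phi$), and the descent lemma \eqref{UB} to absorb the linearization error of $G$; this produces the $B_G$, $\|q^k\|T$, and $\gamma\tau^2$ coefficients that appear multiplying $\|u^k-u^{k+1}\|^2$ (the last from the Lipschitz constant $\tau$ of $\Theta$ and the $\gamma\|\Theta\|^2$ part of the augmented term). Taking $\mathbb E_{i(k)}$ and using \eqref{expectationG}--\eqref{expectationP_2} turns the block quantities into $\tfrac1N$ times the full $L(\cdot,q^k)$ differences, yielding part (i) once the dual-update terms are handled.

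\emph{Step 2 (dual update and the $q^{k+1/2}$, $q^k$ terms).} For the $\|p-p^k\|^2-\mathbb E_{i(k)}\|p-p^{k+1}\|^2$ telescoping and the $\|q^{k+1/2}-p^k\|^2$, $\|q^k-p^k\|^2$ residual terms, I would expand the projected dual step \eqref{dual}, $p^{k+1}=\mathcal P_\mu\big(p^k+\tfrac{\rho}{\gamma}(q^{k+1/2}-p^k)\big)$, using the non-expansiveness \eqref{eq:Projecproperty4} of $\mathcal P_\mu$ and the characterization \eqref{eq:Projecproperty3}; since $p\in\mathfrak B_\mu$, $\mathcal P_\mu(p)=p$. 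Here Proposition~\ref{proposition} (applied with $\mathcal S=\mathbf C^*$, $z=p^k$, and the two arguments $\gamma\Theta(u^{k+1})$ and $\gamma\Theta(u^k)$, or appropriate shifts) is what converts inner products of the form $\langle q^{k+1/2}-q^k,\cdot\rangle$ into squared-norm differences, and Lemma~\ref{lemma:bound0} converts $\langle p,\Theta(u^{k+1})\rangle-\varphi(\Theta(u^{k+1}),p^k)$ into the $\tfrac1{2\gamma}\|p-p^k\|^2$ slack. The parameter choice $\rho=\gamma/(2N-1)$ \eqref{para-choice-convergence} is exactly what makes the coefficients of $\|q^{k+1/2}-p^k\|^2$ balance across parts (i) and (ii): adding (i)+(ii) the $\pm\tfrac{(N-1)\rho\epsilon^k}{N\gamma^2}$ terms cancel, and the leftover $\|u^k-u^{k+1}\|^2$ coefficients combine to the clean $\beta/4$ in part (iii). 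So part (iii) is just the sum of (i) and (ii) after this cancellation, and part (ii) itself is the dual-only analogue obtained from the dual update alone.

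\emph{Step 3 (the un-averaged bound, part (iv)).} Part (iv) is different in flavor: it bounds $L(u^k,p)-L(u,q^k)$ (note $u^k$, not $u^{k+1}$, in the first slot and no $\epsilon^k/N$ prefactor) by $\|u^k-u^{k+1}\|$ and $\|p^{k+1}-p^k\|$ to first order. I would derive this by starting again from the Step-1 variational inequality but \emph{not} completing the square: keep the cross terms $\tfrac1{\epsilon^k}\langle\nabla K(u^{k+1})-\nabla K(u^k),u^{k+1}-u\rangle$, bound $\|\nabla K(u^{k+1})-\nabla K(u^k)\|\le B\|u^{k+1}-u^k\|$, and use the linear-growth bound on $\partial J$ from Assumption~\ref{assump1}(i) (giving $c_1\|u^k\|+c_2$), $\|\nabla G(u^k)\|$, and the Lipschitz constant $\tau$ of $\Theta$ to control $\langle q^k,\Theta(u^k)-\Theta(u)\rangle$ versus its linearization. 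The $h_2\|p^{k+1}-p^k\|$ term comes from writing $\langle p-q^k,\Theta(u^k)\rangle$ and relating $q^k$ to $p^k$ and then $p^k$ to $p^{k+1}$ via \eqref{dual}; the $B/(2\epsilon^k)\|u^k-u^{k+1}\|^2$ is the quadratic Bregman remainder. Then take $\mathbb E_{i(k)}$.

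\emph{Main obstacle.} The delicate part is the bookkeeping in Step 2: tracking the exact coefficients of $\|q^{k+1/2}-p^k\|^2$, $\|q^k-p^k\|^2$, and $\|u^k-u^{k+1}\|^2$ through the projection inequalities so that the choice $\rho=\gamma/(2N-1)$ produces the stated cancellations and the sign of the $\|u^k-u^{k+1}\|^2$ coefficient in (iii) is genuinely $-\beta/4\le 0$ (which requires $\epsilon^k\le\beta/[2(B_G+\|q^k\|T+\gamma\tau^2)]$, precisely the stepsize rule in SPDCL). Getting the factor $1/N$ versus $(N-1)/N$ correct in front of the $\gamma\tau^2$ and the augmented-Lagrangian slack terms — which is where the "large step size" improvement over a naive $\rho=\gamma/N$ comes from — is the crux; everything else is a careful but routine assembly of the inequalities quoted above.
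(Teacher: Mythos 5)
Your plan follows essentially the same route as the paper's proof: the variational inequality for the block subproblem plus the Bregman three-point identity and the estimates of Lemma~\ref{lemma:Lipschitz3point} for part (i), the projection characterization together with Proposition~\ref{proposition} and the expanded dual update for part (ii), summation with the $\rho=\gamma/(2N-1)$-induced cancellation of the $\|q^{k+1/2}-p^k\|^2$ terms for part (iii), and the first-order (non-squared) bounds with the linear-growth constants for part (iv). The only small inaccuracy is attributing the $\|p-p^k\|^2-\|p-p^{k+1}\|^2$ telescoping to Lemma~\ref{lemma:bound0}, whereas it actually comes directly from expanding the projected dual step and the non-expansiveness of $\mathcal{P}_\mu$; this does not affect the correctness of the approach.
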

\begin{proof}
The proof of this lemma is left in Appendix.
\end{proof}
Based Lemma~\ref{lemma:bound1}, we establish the following convergence analysis of SPDCL.
\begin{theorem}[Almost surely convergence]\label{theo:convergence}
Let assumptions of Lemma~\ref{lemma:bound1} hold, then
\begin{itemize}
\item[{\rm(i)}] $\sum\limits_{k=0}\limits^{+\infty}\mathbb{E}_{i(k)}\frac{\beta}{4}\|u^k-u^{k+1}\|^2<+\infty$ $\mbox{a.s.}$ and $\sum\limits_{k=0}\limits^{+\infty}\frac{\epsilon^k}{2N\gamma}\|q^k-p^{k}\|^2<+\infty$ $\mbox{a.s.}$;
\item[{\rm(ii)}] The sequence $\{u^{k}\}$ generated by SPDCL is almost surely bounded;
\item[{\rm(iii)}] Every cluster point of $\{(u^{k},p^k)\}$ almost surely is a saddle point of Lagrangian of (P).
\end{itemize}
\end{theorem}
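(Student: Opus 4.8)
The plan is to derive all three parts of Theorem~\ref{theo:convergence} from the telescoping structure already set up in Lemma~\ref{lemma:bound1}, combined with a supermartingale convergence argument. First I would invoke part (iii) of Lemma~\ref{lemma:bound1} with the fixed choice $(u,p)=(u^*,p^*)$ (a saddle point, which exists by Assumption~\ref{assump1}(vii)). Since $L(u^{k+1},p^*)-L(u^*,q^k)\geq 0$ cannot be used directly, I would instead observe that $L(u^{k+1},p^*)-L(u^*,q^k)=[L(u^{k+1},p^*)-L(u^*,p^*)]+[L(u^*,p^*)-L(u^*,q^k)]$ and that by the saddle-point inequality~\eqref{saddle point:L} the first bracket is $\ge 0$ and, since $q^k\in\mathbf{C}^*$, the second bracket is $\ge 0$ as well; hence the left-hand side of Lemma~\ref{lemma:bound1}(iii) is nonnegative. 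This gives
\begin{equation*}
\mathbb{E}_{i(k)}\Lambda^k(u^*,p^*,u^{k+1},p^{k+1})+\mathbb{E}_{i(k)}\Big[\frac{\beta}{4}\|u^k-u^{k+1}\|^2-\frac{\epsilon^k}{2N\gamma}\|q^k-p^k\|^2\Big]\le\Lambda^k(u^*,p^*,u^k,p^k),
\end{equation*}
and then using the monotonicity $\Lambda^{k+1}\le\Lambda^k$ established in~\eqref{eq:Lambda} to replace $\Lambda^k(u^*,p^*,u^{k+1},p^{k+1})$ on the left by $\Lambda^{k+1}(u^*,p^*,u^{k+1},p^{k+1})$, we obtain the conditional-expectation inequality
\begin{equation*}
\mathbb{E}_{i(k)}\Lambda^{k+1}(u^*,p^*,u^{k+1},p^{k+1})\le\Lambda^k(u^*,p^*,u^k,p^k)-\mathbb{E}_{i(k)}\Big[\frac{\beta}{4}\|u^k-u^{k+1}\|^2\Big]+\frac{\epsilon^k}{2N\gamma}\|q^k-p^k\|^2.
\end{equation*}

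To make this a genuine supermartingale relation I still need to absorb the $+\frac{\epsilon^k}{2N\gamma}\|q^k-p^k\|^2$ term; the point of Lemma~\ref{lemma:bound1}(ii) (or a combination of (i) and (ii)) is precisely that this quantity is itself controlled — more carefully, I would combine (iii) with a bound showing $\sum_k \frac{\epsilon^k}{2N\gamma}\|q^k-p^k\|^2$ is finite, or better, re-examine the constants so that the $\|q^k-p^k\|^2$ term appears with the correct sign in a single master inequality. Granting that, the Robbins--Siegmund supermartingale convergence theorem applies to the nonnegative sequence $V_k:=\Lambda^k(u^*,p^*,u^k,p^k)$: it converges almost surely to a finite limit, and $\sum_k\mathbb{E}_{i(k)}\frac{\beta}{4}\|u^k-u^{k+1}\|^2<\infty$ a.s. together with $\sum_k\frac{\epsilon^k}{2N\gamma}\|q^k-p^k\|^2<\infty$ a.s., which is exactly part~(i).

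For part~(ii), almost-sure boundedness of $\{u^k\}$ follows immediately from part~(i) and inequality~\eqref{eq:Lambda_2}: the latter gives $\frac{\beta}{2}\|u^k-u^*\|^2\le\Lambda^k(u^*,p^*,u^k,p^k)=V_k$, and $V_k$ converges a.s. hence is a.s. bounded; note also that the step-sizes $\epsilon^k$ stay bounded away from $0$ along the trajectory precisely because $\|q^k\|$ (hence $\|p^k\|$, which lives in $\mathfrak{B}_\mu$ by construction of the dual update) is bounded, so that $\epsilon^k\downarrow\epsilon^\infty>0$. For part~(iii), I would pass to an a.s. convergent subsequence $(u^{k_j},p^{k_j})\to(\bar u,\bar p)$; boundedness of $\{p^k\}\subset\mathfrak{B}_\mu$ makes the dual sequence automatically bounded, and $\bar p\in\mathbf{C}^*\cap\mathfrak{B}_\mu$. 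From part~(i), $\|u^k-u^{k+1}\|\to 0$ and $\|q^k-p^k\|\to 0$ a.s.; the first kills the proximal perturbation in the primal subproblem~\eqref{eq:APk}, and the second (together with $\epsilon^k\to\epsilon^\infty$ and continuity of $\Pi$, $\Theta$, $\nabla G$, $\nabla\Omega$) lets me take limits in the optimality conditions of (AP$^{k}$) and in the dual update~\eqref{dual}. Finally I would feed this into Lemma~\ref{lemma:bound1}(iv): along the subsequence its right-hand side tends to $0$ (the $\frac{B}{2\epsilon^k}\mathbb{E}_{i(k)}\|u^k-u^{k+1}\|^2$ term and the two $h_1,h_2$ terms all vanish because $\|u^k-u^{k+1}\|\to0$ and $\|p^{k+1}-p^k\|\to 0$, while $h_1,h_2$ stay bounded by boundedness of $\{u^k\},\{p^k\},\{q^k\}$), so $L(\bar u,p)-L(u,\bar p)\le 0$ for all $u\in\mathbf{U},p\in\mathbf{C}^*\cap\mathfrak{B}_\mu$; since $\mu=\mu_0+1$ strictly exceeds the dual-optimal bound, this extends to all $p\in\mathbf{C}^*$, i.e. $(\bar u,\bar p)$ solves (EP)~\eqref{VIS} and hence, by Theorem~\ref{theo:Lag}, is a saddle point of $L$.

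The main obstacle I anticipate is the bookkeeping around the $\|q^k-p^k\|^2$ and $\|q^{k+1/2}-p^k\|^2$ terms: they appear with opposite signs across parts (i) and (ii) of Lemma~\ref{lemma:bound1}, and assembling them so that $V_k$ is a legitimate nonnegative supermartingale (no residual positive term that is not summable) is where the choice $\rho=\gamma/(2N-1)$ in~\eqref{para-choice-convergence} and the large-step condition on $\epsilon^k$ must be used in exactly the right way — essentially the $2N-1$ in the denominator of $\rho$ is calibrated so that the coefficient $\frac{(N-1)\rho}{N\gamma^2}-\frac{(N-1)\rho}{N\gamma^2}$ of the $q^{k+1/2}$ terms cancels and the $\frac{1}{2N\gamma}$ coefficient of $\|q^k-p^k\|^2$ is exactly absorbed. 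A secondary subtlety is the standard one for coordinate methods: passing from the conditional-expectation supermartingale inequality to an a.s. statement about cluster points of the actual (random) trajectory, which is handled by the Robbins--Siegmund lemma plus the observation that on the a.s. event where $V_k$ converges and the sums are finite, the deterministic limiting arguments above go through pathwise.
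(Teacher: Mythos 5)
Your proposal matches the paper's proof essentially step for step: plug $(u^*,p^*)$ into statement (iii) of Lemma~\ref{lemma:bound1}, use nonnegativity of $L(u^{k+1},p^*)-L(u^*,q^k)$ (your two-bracket decomposition is exactly the (EP) property the paper invokes) together with the monotonicity~\eqref{eq:Lambda} to get a Robbins--Siegmund recursion for $\Lambda^k(u^*,p^*,u^k,p^k)$, deduce almost sure boundedness of $\{u^k\}$ from~\eqref{eq:Lambda_2}, use boundedness of $\{p^k\}\subset\mathfrak{B}_\mu$ to get $\epsilon^k\geq\underline{\epsilon}>0$, and pass to the limit in statement (iv) of Lemma~\ref{lemma:bound1} along a convergent subsequence (the paper localizes to a neighborhood of the cluster point and then uses convexity to globalize, while you fix $(u,p)$ and then extend from $\mathbf{C}^*\cap\mathfrak{B}_\mu$ to $\mathbf{C}^*$; both are routine). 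The one obstacle you flag --- the apparently uncontrolled $+\frac{\epsilon^k}{2N\gamma}\|q^k-p^k\|^2$ term --- is only a sign typo in the printed statement (iii) of Lemma~\ref{lemma:bound1}: summing statements (i) and (ii) as the appendix does, the $\|q^{k+1/2}-p^k\|^2$ coefficients cancel (this is where $\rho=\gamma/(2N-1)$ enters) and the $\|q^k-p^k\|^2$ term survives with the favorable sign, which is exactly how the paper uses it when defining the nonnegative quantity $S_k$; so your ``single master inequality'' guess is the correct resolution and no separate summability argument is needed.
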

\begin{proof}
\begin{itemize}
\item[(i)] Take $u=u^*$ and $p=p^*$ in statement (iii) of Lemma~\ref{lemma:bound1}, we have
\begin{eqnarray}\label{eq:bound2_1_7}
&&\Lambda^k(u^*,p^*,u^k,p^k)\nonumber\\
&\geq&\mathbb{E}_{i(k)}\Lambda^{k}(u^{*},p^*,u^{k+1},p^{k+1})+S_k\nonumber\\
&\geq&\mathbb{E}_{i(k)}\Lambda^{k+1}(u^{*},p^*,u^{k+1},p^{k+1})+S_k.\\
&&\qquad\qquad\qquad\qquad\qquad\qquad\mbox{(by~\eqref{eq:Lambda})}\nonumber
\end{eqnarray}
By the definition of saddle point and assumption~\eqref{para-choice-convergence}, $(u^*,p^*)$ is solution of (EP),\\  $S_k=\mathbb{E}_{i(k)}\bigg{[}\frac{\epsilon^k}{N}\big{[}L(u^{k+1},p^*)-L(u^*,q^{k})\big{]}+\frac{\beta}{4}\|u^k-u^{k+1}\|^2+\frac{\epsilon^k}{2N\gamma}\|q^k-p^{k}\|^2\bigg{]}$\\ is positive. From~\eqref{eq:Lambda_2}, we have that $\Lambda^k(u^*,p^*,u^k,p^k)$ is nonnegative.\\
By the Robbins-Siegmund Lemma~\cite{RS}, we obtain that $\lim\limits_{k\rightarrow+\infty}\Lambda^k(u^*,p^*,u^k,p^k)$ almost surely exists, $\sum\limits_{k=0}\limits^{+\infty}\mathbb{E}_{i(k)}\frac{\beta}{4}\|u^k-u^{k+1}\|^2<+\infty$ $\mbox{a.s.}$ and $\sum\limits_{k=0}\limits^{+\infty}\frac{\epsilon^k}{2N\gamma}\|q^k-p^{k}\|^2<+\infty$ $\mbox{a.s.}$.\\
\item[(ii)] Since $\lim\limits_{k\rightarrow+\infty}\Lambda^k(u^{*},p^*,u^k,p^k)$ almost surely exists, thus $\Lambda^k(u^{*},p^*,u^k,p^k)$ is almost surely bounded.
Thanks~\eqref{eq:Lambda_2} it implies the sequence $\{u^k\}$ is almost surely bounded.
\item[(iii)] From statement (ii), we have that the sequence $\{u^k\}$ is almost surely bounded. Together with the SPDCL scheme guarantees that the sequence $\{p^k\}$ is bounded. Therefore, there exists a positive number $\underline{\epsilon}$ such that $\epsilon^k\geq\underline{\epsilon}$ with probability 1. Then from statement (i) we have that
\begin{eqnarray*}
\sum\limits_{k=0}\limits^{+\infty}\mathbb{E}_{i(k)}\frac{\beta}{4}\|u^k-u^{k+1}\|^2<+\infty\qquad\mbox{a.s.}
\end{eqnarray*}
and
\begin{eqnarray*}
\sum\limits_{k=0}\limits^{+\infty}\frac{\underline{\epsilon}}{2N\gamma}\|q^k-p^{k}\|^2\leq\sum\limits_{k=0}\limits^{+\infty}\frac{\epsilon^k}{2N\gamma}\|q^k-p^{k}\|^2<+\infty.\nonumber\\ \mbox{a.s.}
\end{eqnarray*}
It follows that
\begin{eqnarray}\label{eq:ukukp1}
\lim\limits_{k\to \infty}\mathbb{E}_{i(k)}\|u^{k}-u^{k+1}\|=0\nonumber\\
\mbox{and}\quad\lim\limits_{k\to \infty}\|q^{k}-p^{k}\|=0.
\end{eqnarray}
Since
\begin{eqnarray*}
\|p^k-p^{k+1}\|&\leq&\frac{\rho}{\gamma}\|q^{k+1/2}-p^k\|\nonumber\\
               &\leq&\frac{\rho}{\gamma}\left(\|q^{k+1/2}-q^k\|+\|q^{k}-p^k\|\right)\nonumber\\
               &\leq&\frac{\rho}{\gamma}\left(\gamma\tau\|u^k-u^{k+1}\|+\|q^{k}-p^k\|\right),
\end{eqnarray*}
then from~\eqref{eq:ukukp1}, we have almost surely
\begin{eqnarray}\label{eq:ukukp2}
\lim\limits_{k\to \infty}\mathbb{E}_{i(k)}\|p^{k}-p^{k+1}\|=0.
\end{eqnarray}
Let $\mathbb{W}_0$ denote the subset such that $\{u^k\}$ is not bounded, and let $\mathbb{W}_1$ denote the subset for which~\eqref{eq:ukukp1} does not hold: $\mathbb{P}(\mathbb{W}_0\cup\mathbb{W}_1)=0$. Pick some $\omega\notin\mathbb{W}_0\cup\mathbb{W}_1$. Since the sequence $\{u^k\}$ is almost surely bounded and $\{p^k\}$ is bounded, the sequence $\{(u^k,p^k)\}$ has cluster point. Considering a subsequence of $\{(u^k,p^k)\}$ almost surely converging toward $(\bar{u}(\omega),\bar{p}(\omega))$, let $\mathcal{N}(\bar{u})$ (resp. $\mathcal{N}(\bar{p})$) be neighbourhood of $\bar{u}(\omega)$ (resp. $\bar{p}(\omega)$). Together statement (iv) of Lemma~\ref{lemma:bound1}, the sequence $\{u^k\}$ is almost surely bounded, $\{p^k\}$ is bounded, almost surely $\underline{\epsilon}\leq\epsilon^k$ and $\epsilon^k\leq\epsilon^0$, we also have that there exists positive number $d_1$ and $d_2$ such that
\begin{eqnarray}\label{eq:limit}
&&\frac{1}{N}\big{[}L(u^{k},p)-L(u,q^{k})\big{]}\nonumber\\
&\leq&\frac{B}{2\epsilon^k}\mathbb{E}_{i(k)}\|u^k-u^{k+1}\|^2\nonumber\\
&&+\mathbb{E}_{i(k)}h_1(\epsilon^k,u,p,u^k,u^{k+1},q^k)\|u^k-u^{k+1}\|\nonumber\\
&&+\mathbb{E}_{i(k)}h_2(p,p^k,p^{k+1})\|p^{k+1}-p^k\|\nonumber\\
&\leq&\frac{B}{2\underline{\epsilon}}\mathbb{E}_{i(k)}\|u^k-u^{k+1}\|^2\nonumber\\
&&+\mathbb{E}_{i(k)}d_1\|u^k-u^{k+1}\|\nonumber\\
&&+\mathbb{E}_{i(k)}d_2\|p^{k+1}-p^k\|.\\
&&\forall (u,p)\in\mathcal{N}(\bar{u}(\omega))\times\mathcal{N}(\bar{p}(\omega))\subset\mathbf{U}\times(\mathbf{C}^*\cap\mathfrak{B}_{\mu})\nonumber\\
&&\qquad\qquad\qquad\qquad\qquad\qquad\quad\subset\mathbf{U}\times\mathbf{C}^*\nonumber
\end{eqnarray}
Passing to the limit of~\eqref{eq:limit}, it follows that $[L(\bar{u}(\omega),p)-L(u,\bar{p}(\omega))]\leq 0$, $\forall (u,p)\in\mathcal{N}(\bar{u}(\omega))\times\mathcal{N}(\bar{p}(\omega))\subset\mathbf{U}\times\mathbf{C}^*$. Therefore, $(\bar{u}(\omega),\bar{p}(\omega))$ is a saddle point of $L$ over $\mathcal{N}(\bar{u}(\omega))\times\mathcal{N}(\bar{p}(\omega))$. Since $L(u',p)-L(u,p')$ is convex in $(u',p')$, then $(\bar{u}(\omega),\bar{p}(\omega))$ is a saddle point of $L$ over $\mathbf{U}\times\mathbf{C}^*$.
\end{itemize}
\end{proof}
\section{Convergence rate analysis}\label{rate}
In this section we provide the convergence rate of SPDCL. For the sequence $\{(u^k,p^k)\}$ generated from Algorithm SPDCL, and any $t>0$ we define the average sequence
$$\bar{u}_{t}=\frac{\sum_{k=0}^{t}\epsilon^ku^{k+1}}{\sum_{k=0}^{t}\epsilon^k}\;\mbox{and}\;\bar{p}_{t}=\frac{\sum_{k=0}^{t}\epsilon^kq^k}{\sum_{k=0}^{t}\epsilon^k}.$$
\begin{theorem}\label{thm:primal_rate} {\bf(Expected primal suboptimality and expected feasibility)}\\
\indent Let Assumption~\ref{assump1} and~\ref{assump2} hold, $\{(u^k,p^k)\}$ is generated by SPDCL, the parameter $\rho$  satisfy condition~\eqref{para-choice-convergence}. Then we have that
\begin{itemize}
\item[{\rm(i)}]Boundness for expected vector:\\
    $\|\mathbb{E}_{\mathcal{F}_t}(\bar{u}_t)\|\leq\nu$\\
    where $\nu=\left(\frac{2\epsilon^0}{\beta\underline{\epsilon}}\Lambda(u^*,p^*,u^{0},p^0)\right)^{1/2}+\|u^*\|$;
\item[{\rm(ii)}]Global estimate of expect bifunction values:\\ $\mathbb{E}_{\mathcal{F}_t}\big{[}L(\bar{u}_t,p)-L(u,\bar{p}_t)\big{]}\leq\frac{Nh_3(u,p)}{\underline{\epsilon}(t+1)}$,\\
    where $h_3(u,p)=D(u,u^0)+\frac{N-1}{N}D(u^*,u^0)+\frac{\epsilon^0}{\gamma}\|p-p^0\|^2+\frac{(2N-1)(N-1)\epsilon^0}{N^2}\big{[}\frac{\|p^*-p^0\|^2}{2\gamma}+L_{\gamma}(u^0,p^0)-L(u^*,p^*)\big{]}$, $\forall u\in\mathbf{U}$, $p\in\mathbf{C}^*\cap\mathfrak{B}_{\mu}$, $(u,p)$ could possibly be random;
\item[{\rm(iii)}] Expected feasibility:\\ $\mathbb{E}_{\mathcal{F}_{t}}\|\Pi\left(\Theta(\bar{u}_t)\right)\|\leq\frac{Nd_3}{(\mu-\|p^*\|)\underline{\epsilon}(t+1)}$,\\
    where $d_3=\sup\limits_{\|p\|<\mu}h_3(u^*,p)$;
\item[{\rm(iv)}] Expected primal suboptimality:\\ $-\frac{\|p^*\|Nd_3}{(\mu-\|p^*\|)\underline{\epsilon}(t+1)}\leq\mathbb{E}_{\mathcal{F}_{t}}\left[F(\bar{u}_t)-F(u^*)\right]\leq\frac{Nd_3}{\underline{\epsilon}(t+1)}$.
\end{itemize}
\end{theorem}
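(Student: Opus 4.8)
The plan is to prove (ii) first as the workhorse, then to read off (i) from the Fej\'er-type estimate~\eqref{eq:Lambda_2} together with the supermartingale bound established inside the proof of Theorem~\ref{theo:convergence}, and finally to deduce (iii)--(iv) from (ii) by feeding it well-chosen (possibly random) test pairs $(u,p)$.

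For (ii): I would apply Lemma~\ref{lemma:bound1}(iii) twice, once at the given $(u,p)\in\mathbf{U}\times(\mathbf{C}^*\cap\mathfrak{B}_{\mu})$ and once at a saddle point $(u^*,p^*)$ with weight $\tfrac{N-1}{N}$, and add the two. Because $q^k=\Pi(p^k+\gamma\Theta(u^k))\in\mathbf{C}^*$ and $(u^*,p^*)$ solves (EP)~\eqref{VIS}, the extra left-hand term $L(u^{k+1},p^*)-L(u^*,q^k)$ is nonnegative and may be discarded; dropping also the nonnegative quadratics on the right and writing $\Xi^k(u',p'):=\Lambda^k(u,p,u',p')+\tfrac{N-1}{N}\Lambda^k(u^*,p^*,u',p')$, this gives
\[
\frac{\epsilon^k}{N}\,\mathbb{E}_{i(k)}\big[L(u^{k+1},p)-L(u,q^k)\big]\ \le\ \Xi^k(u^k,p^k)-\mathbb{E}_{i(k)}\Xi^k(u^{k+1},p^{k+1}).
\]
The crux is the one-step monotonicity $\Xi^k(u',p')\ge\Xi^{k+1}(u',p')$ for \emph{every} $(u',p')$: the $\epsilon$-difference of the $(u,p)$-summand contributes $(\epsilon^k-\epsilon^{k+1})\big[\tfrac{1}{2N\rho}\|p-p'\|^2+\tfrac{N-1}{N}(L_\gamma(u',p')-L(u^*,p^*))\big]$, and Lemma~\ref{lemma:bound0} with~\eqref{saddle point:L} bounds $L_\gamma(u',p')-L(u^*,p^*)\ge-\tfrac{1}{2\gamma}\|p^*-p'\|^2$; the negative multiple of $\|p^*-p'\|^2$ thereby produced is cancelled \emph{exactly} by the positive one carried by the $\tfrac{N-1}{N}\Lambda^k(u^*,p^*,\cdot)$ summand (this is precisely where $\rho=\gamma/(2N-1)$, i.e.~\eqref{para-choice-convergence}, is used), leaving the nonnegative remainder $(\epsilon^k-\epsilon^{k+1})\tfrac{2N-1}{2N\gamma}\|p-p'\|^2$. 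Taking total expectations, telescoping over $k=0,\dots,t$, and using $\Xi^{t+1}\ge0$ (again via Lemma~\ref{lemma:bound0} and~\eqref{eq:Lambda_2}) yields $\mathbb{E}\sum_{k=0}^t\tfrac{\epsilon^k}{N}[L(u^{k+1},p)-L(u,q^k)]\le\Xi^0(u^0,p^0)$, and a term-by-term comparison gives $\Xi^0(u^0,p^0)\le h_3(u,p)$. Finally, convexity of $L(\cdot,p)$ and affineness of $L(u,\cdot)$ in $p$ give $\big(\sum_k\epsilon^k\big)[L(\bar{u}_t,p)-L(u,\bar{p}_t)]\le\sum_k\epsilon^k[L(u^{k+1},p)-L(u,q^k)]$; taking expectations and using $\sum_{k=0}^t\epsilon^k\ge(t+1)\underline{\epsilon}$ gives (ii).

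For (i) I would combine~\eqref{eq:Lambda_2}, which gives $\|u^{k+1}-u^*\|^2\le\tfrac{2}{\beta}\Lambda^{k+1}(u^*,p^*,u^{k+1},p^{k+1})$, with the recursion in the proof of Theorem~\ref{theo:convergence}(i) (where $S_k\ge0$), which gives $\mathbb{E}\Lambda^{k+1}(u^*,p^*,u^{k+1},p^{k+1})\le\Lambda^0(u^*,p^*,u^0,p^0)$ for all $k$; since $\bar{u}_t$ is a convex combination of the $u^{k+1}$, Jensen for $\|\cdot\|^2$ gives $\|\bar{u}_t-u^*\|^2\le\tfrac{1}{\sum_k\epsilon^k}\sum_k\epsilon^k\|u^{k+1}-u^*\|^2\le\tfrac{\epsilon^0}{(t+1)\underline{\epsilon}}\sum_{k=0}^t\|u^{k+1}-u^*\|^2$, whence $\mathbb{E}\|\bar{u}_t-u^*\|^2\le\tfrac{2\epsilon^0}{\beta\underline{\epsilon}}\Lambda^0(u^*,p^*,u^0,p^0)$ and $\|\mathbb{E}_{\mathcal{F}_t}\bar{u}_t\|\le\|u^*\|+(\mathbb{E}\|\bar{u}_t-u^*\|^2)^{1/2}=\nu$. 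For (iii), apply (ii) with $u=u^*$ and the random $p=p^*+(\mu-\|p^*\|)\Pi(\Theta(\bar{u}_t))/\|\Pi(\Theta(\bar{u}_t))\|\in\mathbf{C}^*\cap\mathfrak{B}_{\mu}$ (the case $\Pi(\Theta(\bar{u}_t))=0$ being trivial); using $\Theta=\Pi(\Theta)+\Pi_{-\mathbf{C}}(\Theta)$, \eqref{eq:Projecproperty5}--\eqref{eq:Projecproperty6}, $\langle p^*,\Theta(u^*)\rangle=0$, $\Theta(u^*)\in-\mathbf{C}$, $\bar{p}_t\in\mathbf{C}^*$ and~\eqref{saddle point:L} one gets $L(\bar{u}_t,p)-L(u^*,\bar{p}_t)\ge(\mu-\|p^*\|)\|\Pi(\Theta(\bar{u}_t))\|\ge0$, and dividing by $\mu-\|p^*\|>0$ after invoking (ii) and $h_3(u^*,p)\le d_3$ gives (iii). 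For (iv), the upper bound follows from (ii) with $u=u^*$, $p=0$, noting $F(\bar{u}_t)-F(u^*)\le L(\bar{u}_t,0)-L(u^*,\bar{p}_t)$ since $\langle\bar{p}_t,\Theta(u^*)\rangle\le0$; the lower bound follows from $F(\bar{u}_t)=L(\bar{u}_t,p^*)-\langle p^*,\Theta(\bar{u}_t)\rangle\ge L(u^*,p^*)-\langle p^*,\Theta(\bar{u}_t)\rangle=F(u^*)-\langle p^*,\Theta(\bar{u}_t)\rangle$, discarding the nonpositive $\langle p^*,\Pi_{-\mathbf{C}}(\Theta(\bar{u}_t))\rangle$, applying Cauchy--Schwarz, and invoking (iii).

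The main obstacle is entirely in (ii): one must engineer the combined potential $\Xi^k$ so that (a) the cross term $L(u^{k+1},p^*)-L(u^*,q^k)$ is provably nonnegative (hence discardable) and (b) $\Xi^k\ge\Xi^{k+1}$ holds for an \emph{arbitrary, possibly random} test pair $(u,p)$ --- which rests on the exact cancellation of the $\|p^*-p'\|^2$ contributions forced by $\rho=\gamma/(2N-1)$ and by Lemma~\ref{lemma:bound0} --- and then verify the term-by-term domination $\Xi^0(u^0,p^0)\le h_3(u,p)$. By comparison, the passage to the averaged iterates via convexity/affineness of $L$ and the replacement $\sum_{k=0}^t\epsilon^k\ge(t+1)\underline{\epsilon}$ are routine bookkeeping.
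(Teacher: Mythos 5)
Your proposal is correct and follows essentially the same route as the paper: part (ii) is obtained by adding $\tfrac{N-1}{N}$ times the $(u^*,p^*)$-instance of Lemma~\ref{lemma:bound1}(iii) to the $(u,p)$-instance so that the combined potential telescopes despite the decreasing $\epsilon^k$ (your exact-cancellation bookkeeping of the $\|p^*-p'\|^2$ coefficients under $\rho=\gamma/(2N-1)$ is just a regrouping of the paper's single nonnegative bracket $\tfrac{\|p^*-p^k\|^2}{2\gamma}+L_\gamma(u^k,p^k)-L(u^*,p^*)$), and (i), (iii), (iv) are derived exactly as in the paper. The only cosmetic deviations are your dual test point $p^*+(\mu-\|p^*\|)\Pi(\Theta(\bar{u}_t))/\|\Pi(\Theta(\bar{u}_t))\|$ in (iii) versus the paper's $\mu\Pi(\Theta(\bar{u}_t))/\|\Pi(\Theta(\bar{u}_t))\|$, and the $p=0$ shortcut for the upper bound in (iv); both are equally valid.
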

\begin{proof}
\begin{itemize}
\item[{\rm(i)}] From statement (iii) of Lemma~\ref{lemma:bound1}, we obtain that
\begin{eqnarray*}\label{eq:rate_primal_1}
&&\frac{\epsilon^k}{N}\mathbb{E}_{i(k)}\big{[}L(u^{k+1},p)-L(u,q^{k})\big{]}\nonumber\\
&\leq&\Lambda^k(u,p,u^k,p^k)-\mathbb{E}_{i(k)}\Lambda^k(u,p,u^{k+1},p^{k+1})\nonumber\\
&&-\mathbb{E}_{i(k)}\bigg{[}\frac{\beta}{4}\|u^k-u^{k+1}\|^2+\frac{\epsilon^k}{2N\gamma}\|q^k-p^k\|^2\bigg{]}\nonumber\\
&\leq&\Lambda^k(u,p,u^k,p^k)-\mathbb{E}_{i(k)}\Lambda^k(u,p,u^{k+1},p^{k+1}).
\end{eqnarray*}
Taking expectation with respect to $\mathcal{F}_t$, $t>k$ for above inequality, we obtain that
\begin{eqnarray}\label{eq:rate_primal_2}
&&\mathbb{E}_{\mathcal{F}_t}\frac{\epsilon^k}{N}\big{[}L(u^{k+1},p)-L(u,q^{k})\big{]}\nonumber\\
&\leq&\mathbb{E}_{\mathcal{F}_t}\big{[}\Lambda^k(u,p,u^k,p^k)-\Lambda^k(u,p,u^{k+1},p^{k+1})\big{]}.\nonumber\\
\end{eqnarray}
Take $u=u^*\in\mathbf{U}$ and $p=p^*\in\mathbf{C}^*\cap\mathfrak{B}_{\mu}$ in~\eqref{eq:rate_primal_2} we have that
\begin{eqnarray}\label{eq:rate_primal_3}
&&\mathbb{E}_{\mathcal{F}_t}\Lambda^k(u^*,p^*,u^k,p^k)\nonumber\\
&\geq&\mathbb{E}_{\mathcal{F}_t}\Lambda^{k}(u^*,p^*,u^{k+1},p^{k+1})\nonumber\\
&\geq&\mathbb{E}_{\mathcal{F}_t}\Lambda^{k+1}(u^*,p^*,u^{k+1},p^{k+1})
\end{eqnarray}
Together with~\eqref{eq:Lambda_2} and~\eqref{eq:rate_primal_3}, we have
\begin{eqnarray}\label{eq:rate_primal_4_1}
\mathbb{E}_{\mathcal{F}_t}\frac{\beta}{2}\|u^{k+1}-u^*\|^2&\leq&\mathbb{E}_{\mathcal{F}_t}\Lambda^{k+1}(u^*,p^*,u^{k+1},p^{k+1})\nonumber\\
&\leq&\Lambda^0(u^*,p^*,u^{0},p^0).
\end{eqnarray}
From the convexity of $\|\cdot\|^2$ and $\epsilon^k$ is almost surely bounded below with $\underline{\epsilon}$ (by Theorem~\ref{theo:convergence}), we obtain that
\begin{eqnarray*}\label{eq:rate_primal_4_2}
\|\mathbb{E}_{\mathcal{F}_t}(\bar{u}_{t})-u^*\|^2&\leq&\mathbb{E}_{\mathcal{F}_t}\|\bar{u}_{t}-u^*\|^2\\
                                                 &\leq&\mathbb{E}_{\mathcal{F}_t}\frac{\sum_{k=0}^t\epsilon^k\|u^{k+1}-u^*\|^2}{\sum_{k=0}^t\epsilon^k}\\
                                                 &\leq&\frac{2\epsilon^0}{\beta\underline{\epsilon}}\Lambda^0(u^*,p^*,u^{0},p^0).
\end{eqnarray*}
Here comes the results.
\item[(ii)] Then from~\eqref{eq:rate_primal_2}, we obtain that
\begin{eqnarray*}\label{eq:rate_primal_5}
&&\mathbb{E}_{\mathcal{F}_t}\frac{\epsilon^k}{N}\big{[}L(u^{k+1},p)-L(u,q^{k})\big{]}\nonumber\\
&\leq&\mathbb{E}_{\mathcal{F}_t}\big{[}\Lambda^k(u,p,u^k,p^k)-\Lambda^k(u,p,u^{k+1},p^{k+1})\big{]}\nonumber\\
&=&\mathbb{E}_{\mathcal{F}_t}\big{[}\Lambda^k(u,p,u^k,p^k)-\Lambda^k(u,p,u^{k+1},p^{k+1})\big{]}\nonumber\\
&&+\frac{N-1}{N}\mathbb{E}_{\mathcal{F}_t}\bigg{\{}\big{[}\Lambda^k(u^*,p^*,u^k,p^k)-\Lambda^k(u^*,p^*,u^{k+1},p^{k+1})\big{]}\nonumber\\
&&+\big{[}\Lambda^k(u^*,p^*,u^{k+1},p^{k+1})-\Lambda^k(u^*,p^*,u^k,p^k)\big{]}\bigg{\}}
\end{eqnarray*}
From~\eqref{eq:rate_primal_3}, we have that $\mathbb{E}_{\mathcal{F}_t}[\Lambda^k(u^*,p^*,u^{k+1},p^{k+1})-\Lambda^k(u^*,p^*,u^k,p^k)]\leq0$, then by the definition of $\Lambda^k$, it follows
\begin{eqnarray*}
&&\mathbb{E}_{\mathcal{F}_t}\frac{\epsilon^k}{N}\big{[}L(u^{k+1},p)-L(u,q^{k})\big{]}\nonumber\\
&\leq&\mathbb{E}_{\mathcal{F}_t}\big{[}\Lambda^k(u,p,u^k,p^k)-\Lambda^k(u,p,u^{k+1},p^{k+1})\big{]}\nonumber\\
&&+\frac{N-1}{N}\mathbb{E}_{\mathcal{F}_t}\bigg{\{}\big{[}\Lambda^k(u^*,p^*,u^k,p^k)-\Lambda^k(u^*,p^*,u^{k+1},p^{k+1})\big{]}\nonumber\\
&=&\mathbb{E}_{\mathcal{F}_t}\bigg{\{}\big{[}D(u,u^k)-D(u,u^{k+1})\big{]}+\frac{\epsilon^k}{2N\rho}\big{[}\|p-p^k\|^2\nonumber\\
&&-\|p-p^{k+1}\|^2\big{]}+\frac{(N-1)\epsilon^k}{N}\big{[}L_{\gamma}(u^k,p^k)-L_{\gamma}(u^{k+1},p^{k+1})\big{]}\nonumber\\
&&+\frac{N-1}{N}\big{[}D(u^*,u^k)-D(u^*,u^{k+1})\big{]}\nonumber\\
&&+\frac{(2N-1)(N-1)\epsilon^k}{2N^2\gamma}\big{[}\|p^*-p^k\|^2-\|p^*-p^{k+1}\|^2\big{]}\nonumber\\
&&+\frac{(N-1)^2\epsilon^k}{N^2}\big{[}L_{\gamma}(u^k,p^k)-L_{\gamma}(u^{k+1},p^{k+1})\big{]}\bigg{\}}\nonumber\\
&=&\mathbb{E}_{\mathcal{F}_t}\bigg{\{}\big{[}D(u,u^k)-D(u,u^{k+1})\big{]}+\frac{\epsilon^k}{2N\rho}\big{[}\|p-p^k\|^2\nonumber\\
&&-\|p-p^{k+1}\|^2\big{]}+\frac{N-1}{N}\big{[}D(u^*,u^k)-D(u^*,u^{k+1})\big{]}\nonumber\\
&&+\frac{(2N-1)(N-1)\epsilon^k}{N^2}\bigg{[}\big{[}\frac{\|p^*-p^k\|^2}{2\gamma}+L_{\gamma}(u^k,p^k)\nonumber\\
&&-L(u^*,p^*)\big{]}-\big{[}\frac{\|p^*-p^{k+1}\|^2}{2\gamma}+L_{\gamma}(u^{k+1},p^{k+1})\nonumber\\
&&-L(u^*,p^*)\big{]}\bigg{]}\bigg{\}}
\end{eqnarray*}
By Lemma~\ref{lemma:bound0} we have that
\begin{eqnarray}\label{eq:positive}
&&\frac{\|p^*-p^k\|^2}{2\gamma}+L_{\gamma}(u^k,p^k)-L(u^*,p^*)\nonumber\\
&=&\frac{\|p^*-p^k\|^2}{2\gamma}+L_{\gamma}(u^k,p^k)-L(u^k,p^*)+L(u^k,p^*)\nonumber\\
&&-L(u^*,p^*)\nonumber\\
&\geq&0
\end{eqnarray}
Combine with $\epsilon^{k+1}\leq\epsilon^k$, we have that
\begin{eqnarray}\label{eq:rate_primal_7}
&&\mathbb{E}_{\mathcal{F}_t}\frac{\epsilon^k}{N}\big{[}L(u^{k+1},p)-L(u,q^{k})\big{]}\nonumber\\
&\leq&\mathbb{E}_{\mathcal{F}_t}\bigg{\{}\big{[}D(u,u^k)-D(u,u^{k+1})\big{]}\nonumber\\
&&+\frac{N-1}{N}\big{[}D(u^*,u^k)-D(u^*,u^{k+1})\big{]}\nonumber\\
&&+\frac{\epsilon^k}{2N\rho}\|p-p^k\|^2-\frac{\epsilon^{k+1}}{2N\rho}\|p-p^{k+1}\|^2\big{]}\nonumber\\
&&+\frac{(2N-1)(N-1)\epsilon^k}{N^2}\big{[}\frac{\|p^*-p^k\|^2}{2\gamma}+L_{\gamma}(u^k,p^k)\nonumber\\
&&-L(u^*,p^*)\big{]}-\frac{(2N-1)(N-1)\epsilon^{k+1}}{N^2}\big{[}\frac{\|p^*-p^{k+1}\|^2}{2\gamma}\nonumber\\
&&+L_{\gamma}(u^{k+1},p^{k+1})-L(u^*,p^*)\big{]}\bigg{\}}
\end{eqnarray}
Summing~\eqref{eq:rate_primal_7} over $k=1,2,...,t$, it follows that
\begin{eqnarray}\label{eq:rate_primal_5}
\mathbb{E}_{\mathcal{F}_t}\sum_{k=0}^{t}\frac{\epsilon^k}{N}\big{[}L(u^{k+1},p)-L(u,q^{k})\big{]}\leq h_3(u,p)
\end{eqnarray}
where $h_3(u,p)=D(u,u^0)+\frac{N-1}{N}D(u^*,u^0)+\frac{\epsilon^0}{\gamma}\|p-p^0\|^2+\frac{(2N-1)(N-1)\epsilon^0}{N^2}\big{[}\frac{\|p^*-p^0\|^2}{2\gamma}+L_{\gamma}(u^0,p^0)-L(u^*,p^*)\big{]}$.\\
Another hand, from the definition of $\bar{u}_t$ and $\bar{p}_t$, we have $\bar{u}_t\in\mathbf{U}$ and $\bar{p}_t\in\mathbf{C}^*\cap\mathfrak{B}_{\mu}$. From the convexity of set $\mathbf{U}$, $\mathbf{C}^*\cap\mathfrak{B}_{\mu}$ and the function $L(u',p)-L(u,p')$ is convex in $u'$ and linear in $p'$, for all $u\in\mathbf{U}$ and $p\in\mathbf{C}^*\cap\mathfrak{B}_{\mu}$, since $\epsilon^k$ is almost surely bounded below with $\underline{\epsilon}$ (by Theorem~\ref{theo:convergence}), we have that
\begin{eqnarray}\label{eq:rate_primal_6}
&&\mathbb{E}_{\mathcal{F}_t}\big{[}L(\bar{u}_{t},p)-L(u,\bar{p}_{t})\big{]}\nonumber\\
&\leq&\mathbb{E}_{\mathcal{F}_t}\frac{1}{\sum_{k=0}^t\epsilon^k}\sum_{k=0}^t\epsilon^k\big{[}L(u^{k+1},p)-L(u,q^k)\big{]}\nonumber\\
&\leq&\mathbb{E}_{\mathcal{F}_t}\frac{1}{\underline{\epsilon}(t+1)}\sum_{k=0}^t\epsilon^k\big{[}L(u^{k+1},p)-L(u,q^k)\big{]}\nonumber\\
&\leq&\frac{Nh_3(u,p)}{\underline{\epsilon}(t+1)}.
\end{eqnarray}
\item[{\rm(iii)}] If $\mathbb{E}_{\mathcal{F}_{t}}\|\Pi\left(\Theta(\bar{u}_t)\right)\|=0$, statement (ii) is obviously. Otherwise, $\mathbb{E}_{\mathcal{F}_{t}}\|\Pi\left(\Theta(\bar{u}_t)\right)\|\neq0$ i.e., there is set $\mathbb{W}_3$ such that $\mathbb{P}\{\omega\in\mathbb{W}_3|\|\Pi\left(\Theta(\bar{u}_t)\right)\|\neq0\}>0$. Let $\hat{p}$ be a random vector:
\begin{eqnarray}\label{eq:p_omega}
\hat{p}(\omega)=\left\{
\begin{array}{cc}
0 & \omega\notin\mathbb{W}_3 \\
\frac{\mu\Pi\big{(}\Theta(\bar{u}_t)\big{)}}{\|\Pi\big{(}\Theta(\bar{u}_t)\big{)}\|} & \omega\in\mathbb{W}_3.
\end{array}
\right.
\end{eqnarray}
Noted that for $\omega\notin\mathbb{W}_3$, we have $\hat{p}(\omega)=0$ and $\|\Pi\left(\Theta(\bar{u}_t)\right)\|=0$. Thus
\begin{eqnarray}\label{star0}
\langle\hat{p}(\omega),\Theta(\bar{u}_t)\rangle=\mu\|\Pi\left(\Theta(\bar{u}_t)\right)\|=0.
\end{eqnarray}
Otherwise, for $\omega\in\mathbb{W}_3$, we have that
\begin{eqnarray}\label{star1}
&&\langle \hat{p}(\omega),\Theta(\bar{u}_t)\rangle\nonumber\\
&=&\langle\frac{\mu\Pi\big{(}\Theta(\bar{u}_t)\big{)}}{\|\Pi\big{(}\Theta(\bar{u}_t)\big{)}\|},\Theta(\bar{u}_t)\rangle\nonumber\\
&=&\langle\frac{\mu\Pi\big{(}\Theta(\bar{u}_{t})\big{)}}{\|\Pi\big{(}\Theta(\bar{u}_{t})\big{)}\|},\Pi\big{(}\Theta(\bar{u}_{t})\big{)}+\Pi_{-\mathbf{C}}\big{(}\Theta(\bar{u}_{t})\big{)}\rangle~\mbox{(by~\eqref{eq:Projecproperty5})}\nonumber\\
&=&\mu\|\Pi\big{(}\Theta(\bar{u}_t)\big{)}\|.\qquad\qquad\mbox{(since~\eqref{eq:Projecproperty6})}
\end{eqnarray}
Together~\eqref{star0} and~\eqref{star1}, we have
\begin{eqnarray}\label{star}
\langle\hat{p},\Theta(\bar{u}_t)\rangle=\mu\|\Pi\left(\Theta(\bar{u}_t)\right)\|
\end{eqnarray}
Moreover, since $\Theta(u^*)\in-\mathbf{C}$ and $\bar{p}_t\in\mathbf{C}^*\cap\mathfrak{B}_{\mu}$, we have $\langle\bar{p}_t,\Theta(u^*)\rangle\leq0$. By~\eqref{star}, we have
\begin{eqnarray}\label{eq:rate1}
&&L(\bar{u}_t,\hat{p})-L(u^*,\bar{p}_t)\nonumber\\
&=&F(\bar{u}_t)+\langle\hat{p},\Theta(\bar{u}_t)\rangle-F(u^*)-\langle\bar{p}_t,\Theta(u^*)\rangle\nonumber\\
&\geq&F(\bar{u}_t)-F(u^*)+\langle\hat{p},\Theta(\bar{u}_t)\rangle\nonumber\\
&=&F(\bar{u}_t)-F(u^*)+\mu\|\Pi\left(\Theta(\bar{u}_t)\right)\|
\end{eqnarray}
Moreover, by taking $u=\bar{u}_t$ in the right hand side of saddle point inequality~\eqref{saddle point:L}, we have
\begin{eqnarray}\label{eq:rate_primal_8}
&&F(\bar{u}_t)-F(u^*)\nonumber\\
&\geq&-\langle p^*,\Theta(\bar{u}_t)\rangle\nonumber\\
&=&-\langle p^*,\Pi\left(\Theta(\bar{u}_t)\right)+\Pi_{-\mathbf{C}}\left(\Theta(\bar{u}_t)\right)\rangle\nonumber\\
&\geq&-\|p^*\|\|\Pi\left(\Theta(\bar{u}_t)\right)\|
\end{eqnarray}
Combine~\eqref{eq:rate1} and~\eqref{eq:rate_primal_8}, we have that
$$\|\Pi\left(\Theta(\bar{u}_t)\right)\|\leq\frac{L(\bar{u}_t,\hat{p})-L(u^*,\bar{p}_t)}{\left(\mu-\|p^*\|\right)}.$$
Take expectation on both side of above inequality, we have that
\begin{eqnarray}
\mathbb{E}_{\mathcal{F}_{t}}\|\Pi\left(\Theta(\bar{u}_t)\right)\|&\leq&\frac{\mathbb{E}_{\mathcal{F}_{t}}[L(\bar{u}_t,\hat{p})-L(u^*,\bar{p}_t)]}{\left(\mu-\|p^*\|\right)}\nonumber\\
&\leq&\mathbb{E}_{\mathcal{F}_{t}}\frac{Nh_3(u^*,\hat{p})}{\left(\mu-\|p^*\|\right)\underline{\epsilon}(t+1)}.\\
&&\qquad\qquad\mbox{(by statement (ii))}\nonumber
\end{eqnarray}
Since random variable $\hat{p}\in\mathfrak{B}_{\mu}$, it follows that
$$\mathbb{E}_{\mathcal{F}_{t}}\|\Pi\left(\Theta(\bar{u}_t)\right)\|\leq\frac{Nd_3}{\left(\mu-\|p^*\|\right)\underline{\epsilon}(t+1)},$$
where $d_3=\sup\limits_{\|p\|<\mu}h_3(u^*,p)$. The statement (iii) is provided.
\item[{\rm(iv)}] Again from~\eqref{eq:rate1},~\eqref{eq:rate_primal_8} and statement (iii), statement (iv) is coming.
\end{itemize}
\end{proof}
Observe that Theorem~\ref{thm:primal_rate} prompts SPDCL has the convergence rate $O(1/t)$. To obtain the dual suboptimality, we need the following additional assumption.
\begin{assumption}\label{assump3}
$G+J$ is coercive on $\mathbf{U}$ if $\mathbf{U}$ is not bounded, that is, $\forall \{u^{k}|k\in \mathbb{N}\}\subset \mathbf{U}$,
\begin{eqnarray*}
\lim_{k\rightarrow+\infty}\|u^{k}\|=+\infty\Rightarrow\lim_{k\rightarrow+\infty}(G+J)(u^{k})=+\infty.
\end{eqnarray*}
\end{assumption}
\indent The following lemma states that for any given bounded set of dual points, the corresponding optimizer of the augmented Lagrangian is bounded.
\begin{lemma}\label{lemma:ALBounded}
Suppose Assumption~\ref{assump1} holds. Let $\mathfrak{B}_p$ be a bounded set: $\mathfrak{B}_p=\{p\in\RR^m|\|p\|\leq d_p\}$. Then we have a positive constant $d_u$, for any $p\in\mathfrak{B}_p$, there is an optimizer $\hat{u}(p)\in\arg\min\limits_{u\in\mathbf{U}}L_{\gamma}(u,p)$ such that $\|\hat{u}(p)\|\leq d_u$.
\end{lemma}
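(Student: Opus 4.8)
The plan is to reduce the uniform boundedness of the minimizers to a uniform upper bound on the objective value $(G+J)(\hat u(p))$, and then to invoke coercivity so that a common sublevel set of $G+J$ on $\mathbf U$ is bounded. First, note that for each $p$ an optimizer $\hat u(p)\in\arg\min_{u\in\mathbf U}L_{\gamma}(u,p)$ exists: the set $\hat{\mathbf U}(p)$ is nonempty by Theorem~\ref{theo:Lag_2}(iii) (alternatively this follows from lower semicontinuity of $G+J$ together with coercivity). Hence it suffices to produce a constant $M$, independent of $p\in\mathfrak B_p$, with $(G+J)(\hat u(p))\le M$; then, when $\mathbf U$ is unbounded, Assumption~\ref{assump3} forces $\{u\in\mathbf U\mid (G+J)(u)\le M\}$ to be bounded, while if $\mathbf U$ is bounded the conclusion is trivial. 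Taking $d_u$ to be the radius of that bounded sublevel set (or of $\mathbf U$) gives the claim.

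For the uniform value bound I would sandwich $L_{\gamma}(\hat u(p),p)=\min_{u\in\mathbf U}L_{\gamma}(u,p)$ between two elementary estimates. For the lower estimate, Theorem~\ref{theo:Lag_2}(ii) gives $\varphi(\theta,p)=\big[\|\Pi(p+\gamma\theta)\|^2-\|p\|^2\big]/2\gamma\ge -\|p\|^2/(2\gamma)\ge -d_p^2/(2\gamma)$, whence for every $u\in\mathbf U$
\[
L_{\gamma}(u,p)=(G+J)(u)+\varphi(\Theta(u),p)\ \ge\ (G+J)(u)-\frac{d_p^2}{2\gamma}.
\]
For the upper estimate, fix once and for all a point $\bar u\in\mathbf U\cap\mathbf{dom}J$, which exists by Assumption~\ref{assump1}(i) and at which $\Theta(\bar u)$ is finite; since the cone $\mathbf C$ has vertex at the origin we have $0\in-\mathbf C$, so choosing $\xi=0$ in the inner minimization defining $L_{\gamma}$ in~\eqref{L_gamma} yields
\[
L_{\gamma}(\bar u,p)\ \le\ \overline{L}_{\gamma}(\bar u,0,p)=(G+J)(\bar u)+\langle p,\Theta(\bar u)\rangle+\frac{\gamma}{2}\|\Theta(\bar u)\|^2\ \le\ (G+J)(\bar u)+d_p\|\Theta(\bar u)\|+\frac{\gamma}{2}\|\Theta(\bar u)\|^2,
\]
a finite constant depending only on $d_p,\gamma,\bar u$.

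Combining the two estimates at $u=\hat u(p)$ and using $L_{\gamma}(\hat u(p),p)\le L_{\gamma}(\bar u,p)$ gives
\[
(G+J)(\hat u(p))\ \le\ L_{\gamma}(\hat u(p),p)+\frac{d_p^2}{2\gamma}\ \le\ (G+J)(\bar u)+d_p\|\Theta(\bar u)\|+\frac{\gamma}{2}\|\Theta(\bar u)\|^2+\frac{d_p^2}{2\gamma}\ =:\ M,
\]
which is independent of $p\in\mathfrak B_p$; invoking Assumption~\ref{assump3} (or boundedness of $\mathbf U$) then produces the desired $d_u$. The only delicate point — and the reason the statement is best read with Assumption~\ref{assump3} in force, as it is introduced immediately before the lemma — is this last coercivity step: under Assumption~\ref{assump1} alone an unbounded $\mathbf U$ may have unbounded sublevel sets of $G+J$, so a uniform value bound would not suffice; everything else is a routine sandwich argument.
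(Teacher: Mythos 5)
Your argument is correct, and it is worth noting that the paper itself offers no proof here---it simply writes ``See~\cite{ZhaoZhu2017}''---so your write-up supplies the standard argument that the citation presumably contains: existence of a minimizer, the lower bound $\varphi(\Theta(u),p)\geq-\|p\|^2/2\gamma$ from the explicit formula in Theorem~\ref{theo:Lag_2}(ii), the upper bound obtained by testing the inner minimization in~\eqref{L_gamma} at $\xi=0\in-\mathbf{C}$ and a fixed $\bar u\in\mathbf{U}\cap\mathbf{dom}J$, and then a uniform sublevel-set bound via coercivity. All the individual estimates check out, including the Cauchy--Schwarz step $\langle p,\Theta(\bar u)\rangle\leq d_p\|\Theta(\bar u)\|$ and the observation that coercivity in the sense of Assumption~\ref{assump3} forces every sublevel set of $G+J$ on $\mathbf{U}$ to be bounded. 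You are also right to flag the one genuine wrinkle: as stated the lemma invokes only Assumption~\ref{assump1}, under which an unbounded $\mathbf{U}$ with non-coercive $G+J$ could defeat the argument (the minimizing set could be empty or escape to infinity); since Assumption~\ref{assump3} is introduced immediately before the lemma and the lemma is used only in that context, reading it as in force is the correct repair, and your proof is complete under that reading.
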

\begin{proof} See~\cite{ZhaoZhu2017}.
\end{proof}
By statement (i) of Theorem~\ref{thm:primal_rate}, we have one ball: $\mathfrak{B}_{\nu}=\{u|\|u\|\leq\nu\}$ such that $\mathbb{E}_{\mathcal{F}_{t}}(\bar{u}_t)$ is contained in $\mathfrak{B}_{\nu}$. Furthermore, from Lemma~\ref{lemma:ALBounded} for $p\in\mathfrak{B}_{\mu}$ we have that there exists $\nu'>0$ such that $\hat{u}(p)=\arg\min L_\gamma(u,p)$ and $\|\hat{u}(p)\|\leq\nu'$. Specifically, we construct a new ball as $\mathfrak{B}_{{\nu}^{+}}=\{u|\|u\|\leq\overline{\nu}=\max(\nu,\nu')\}$. Next proposition shows that the pair of expected vectors $\big{(}\mathbb{E}_{\mathcal{F}_{t}}(\bar{u}_{t}), \mathbb{E}_{\mathcal{F}_{t}}(\bar{p}_{t})\big{)}$ is an approximate saddle point. This assertion will be used to derive the estimation on dual suboptimality for the average point $\bar{p}_t$.
\begin{proposition}{\bf (Approximate saddle points by expected point $\big{(}\mathbb{E}_{\mathcal{F}_{t}}(\bar{u}_t),\mathbb{E}_{\mathcal{F}_{t}}(\bar{p}_t)\big{)}$)}\label{prop:saddle point}\\
Suppose Assumptions of Theorem~\ref{thm:primal_rate} hold
\begin{itemize}
\item[(i)] Expected point $\big{(}\mathbb{E}_{\mathcal{F}_{t}}(\bar{u}_t),\mathbb{E}_{\mathcal{F}_{t}}(\bar{p}_t)\big{)}$ is an approximate saddle point for $L$: $\forall (u,p)\in(\mathbf{U}\cap\mathfrak{B}_{{\nu}^+})\times(\mathbf{C}^*\cap\mathfrak{B}_{\mu})$
\begin{eqnarray*}
-\frac{Nd_4}{\underline{\epsilon}(t+1)}+L\big{(}\mathbb{E}_{\mathcal{F}_{t}}(\bar{u}_{t},p)\big{)}
&\leq& L\big{(}\mathbb{E}_{\mathcal{F}_{t}}(\bar{u}_{t}),\mathbb{E}_{\mathcal{F}_{t}}(\bar{p}_{t})\big{)}\\
&\leq& L\big{(}u,\mathbb{E}_{\mathcal{F}_{t}}(\bar{p}_{t})\big{)}+\frac{Nd_4}{\underline{\epsilon}(t+1)}.
\end{eqnarray*}
where $d_4=\sup_{(u,p)\in(\mathbf{U}\cap\mathfrak{B}_{{\nu}^+})\times(\mathbf{C}^*\cap\mathfrak{B}_{\mu})}h_3(u,p)$.
\item[(ii)] Expected vectors $\big{(}\mathbb{E}_{\mathcal{F}_{t}}(\bar{u}_t),\mathbb{E}_{\mathcal{F}_{t}}(\bar{p}_t)\big{)}$ is an approximate saddle point for $L_\gamma$: $\forall (u,p)\in(\mathbf{U}\cap\mathfrak{B}_{{\nu}^+})\times(\mathbf{C}^*\cap\mathfrak{B}_{\mu})$
\begin{eqnarray*}
-\delta_1+L_{\gamma}\big{(}\mathbb{E}_{\mathcal{F}_{t}}(\bar{u}_{t}),p\big{)}&\leq& L_{\gamma}\big{(}\mathbb{E}_{\mathcal{F}_{t}}(\bar{u}_{t}),\mathbb{E}_{\mathcal{F}_{t}}(\bar{p}_{t})\big{)}\nonumber\\
&\leq& L_{\gamma}\big{(}u,\mathbb{E}_{\mathcal{F}_{t}}(\bar{p}_{t})\big{)}+\delta_2,\nonumber
\end{eqnarray*}
where $\delta_1=\frac{\mu Nd_3+\left(\mu-\|p^*\|\right)Nd_4}{\left(\mu-\|p^*\|\right)\underline{\epsilon}(t+1)}+\frac{\gamma N^2(d_3)^2}{2\left(\mu-\|p^*\|\right)^2\underline{\epsilon}^2(t+1)^2}$ and $\delta_2=\delta_1+\frac{Nd_4}{\underline{\epsilon}(t+1)}$.
\end{itemize}
\end{proposition}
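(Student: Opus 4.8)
The plan is to derive both parts from the three conclusions of Theorem~\ref{thm:primal_rate} together with Jensen's inequality, exploiting that $L(u',p)-L(u,p')$ is convex in $u'$ and affine in $p'$, and the elementary relations between $L$, $L_\gamma$ and $\varphi$ recorded in Lemma~\ref{lemma:bound0} and Theorem~\ref{theo:Lag_2}.

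\textbf{Part (i).} Restricting the global bifunction estimate of Theorem~\ref{thm:primal_rate}(ii) to $u\in\mathbf{U}\cap\mathfrak{B}_{{\nu}^+}$, where $h_3(u,p)\le d_4$, gives $\mathbb{E}_{\mathcal{F}_t}[L(\bar u_t,p)-L(u,\bar p_t)]\le \frac{Nd_4}{\underline{\epsilon}(t+1)}$ for all such $u$ and all $p\in\mathbf{C}^*\cap\mathfrak{B}_\mu$. Because $L(\cdot,p)$ is convex and $L(u,\cdot)$ is affine, Jensen yields $L(\mathbb{E}_{\mathcal{F}_t}(\bar u_t),p)\le\mathbb{E}_{\mathcal{F}_t}L(\bar u_t,p)$ and $L(u,\mathbb{E}_{\mathcal{F}_t}(\bar p_t))=\mathbb{E}_{\mathcal{F}_t}L(u,\bar p_t)$, hence $L(\mathbb{E}_{\mathcal{F}_t}(\bar u_t),p)-L(u,\mathbb{E}_{\mathcal{F}_t}(\bar p_t))\le\frac{Nd_4}{\underline{\epsilon}(t+1)}$. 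Since $\mathbb{E}_{\mathcal{F}_t}(\bar u_t)\in\mathbf{U}\cap\mathfrak{B}_{{\nu}^+}$ (by Theorem~\ref{thm:primal_rate}(i) and the definition of $\mathfrak{B}_{{\nu}^+}$) and $\mathbb{E}_{\mathcal{F}_t}(\bar p_t)\in\mathbf{C}^*\cap\mathfrak{B}_\mu$ (convexity of that set), specializing $u=\mathbb{E}_{\mathcal{F}_t}(\bar u_t)$ produces the left inequality and $p=\mathbb{E}_{\mathcal{F}_t}(\bar p_t)$ the right inequality of (i).

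\textbf{Part (ii).} Write $\bar U=\mathbb{E}_{\mathcal{F}_t}(\bar u_t)$, $\bar P=\mathbb{E}_{\mathcal{F}_t}(\bar p_t)$, $\kappa=\|\Pi(\Theta(\bar U))\|$ and $r_t=\frac{N}{\underline{\epsilon}(t+1)}$. I would collect four facts. (a) As $\bar P\in\mathbf{C}^*$, Lemma~\ref{lemma:bound0} with $p=p'=\bar P$ gives $L_\gamma(u,\bar P)\ge L(u,\bar P)$ for all $u$. (b) For $\|p\|\le\mu$, inserting the admissible slack $\xi=\Pi_{-\mathbf{C}}(\Theta(\bar U))$ into the minimum defining $\varphi$ and using $\Theta(\bar U)-\Pi_{-\mathbf{C}}(\Theta(\bar U))=\Pi(\Theta(\bar U))$ (property~\eqref{eq:Projecproperty5}) gives $\varphi(\Theta(\bar U),p)\le\langle p,\Pi(\Theta(\bar U))\rangle+\frac{\gamma}{2}\kappa^2\le\mu\kappa+\frac{\gamma}{2}\kappa^2$, hence $L_\gamma(\bar U,p)\le F(\bar U)+\mu\kappa+\frac{\gamma}{2}\kappa^2$ and in particular $L_\gamma(\bar U,\bar P)\le F(\bar U)+\mu\kappa+\frac{\gamma}{2}\kappa^2$. (c) The map $u\mapsto\|\Pi(\Theta(u))\|$ is convex (it is the support function $\theta\mapsto\sup\{\langle\theta,y\rangle:y\in\mathbf{C}^*,\|y\|\le1\}=\|\Pi(\theta)\|$, which is convex and $\mathbf{C}$-nondecreasing, composed with the $\mathbf{C}$-convex map $\Theta$), so Jensen and Theorem~\ref{thm:primal_rate}(iii) give $\kappa\le\mathbb{E}_{\mathcal{F}_t}\|\Pi(\Theta(\bar u_t))\|\le\frac{d_3 r_t}{\mu-\|p^*\|}$. (d) Applying the left inequality of part (i) with $p=0$ (so $L(\bar U,0)=F(\bar U)$) gives $L(\bar U,\bar P)\ge F(\bar U)-d_4 r_t$, and chaining with the right inequality of part (i) gives $L(u,\bar P)\ge F(\bar U)-2d_4 r_t$ for every admissible $u$.

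Putting these together: for the left inequality of (ii), $L_\gamma(\bar U,p)-L_\gamma(\bar U,\bar P)\le[F(\bar U)+\mu\kappa+\frac{\gamma}{2}\kappa^2]-L(\bar U,\bar P)\le\mu\kappa+\frac{\gamma}{2}\kappa^2+d_4 r_t$ by (a),(b),(d), and inserting the bound (c) on $\kappa$ and expanding reproduces exactly $\delta_1$. For the right inequality of (ii), $L_\gamma(\bar U,\bar P)-L_\gamma(u,\bar P)\le[F(\bar U)+\mu\kappa+\frac{\gamma}{2}\kappa^2]-L(u,\bar P)\le\mu\kappa+\frac{\gamma}{2}\kappa^2+2d_4 r_t$ by (a),(b),(d), and the same substitution gives exactly $\delta_2=\delta_1+d_4 r_t$. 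I expect the main obstacle to be step (b): the penalty $\varphi(\Theta(\bar U),p)$ has to be bounded by the feasibility residual $\|\Pi(\Theta(\bar U))\|$ rather than by $\|\Theta(\bar U)\|$ (which is generally not small), and the device enabling this is choosing the slack variable to be the $(-\mathbf{C})$-projection of $\Theta(\bar U)$; a secondary technical point is establishing the convexity of $u\mapsto\|\Pi(\Theta(u))\|$ in (c), which is what lets one transfer the feasibility estimate of Theorem~\ref{thm:primal_rate}(iii) from $\bar u_t$ to its expectation $\bar U$.
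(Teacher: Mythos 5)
Your proposal is correct and follows essentially the same route as the paper's proof: part (i) via Jensen's inequality applied to statement (ii) of Theorem~\ref{thm:primal_rate}, and part (ii) via the two-sided control of $\varphi\big(\Theta(\mathbb{E}_{\mathcal{F}_t}(\bar u_t)),\cdot\big)$ --- the upper bound from the slack choice $\xi=\Pi_{-\mathbf{C}}\big(\Theta(\mathbb{E}_{\mathcal{F}_t}(\bar u_t))\big)$ together with the feasibility estimate, and the lower bound from the $p=0$ specialization of part (i) --- combined with Lemma~\ref{lemma:bound0} to pass between $L$ and $L_\gamma$. The one substantive addition is your step (c): the paper asserts $\|\Pi(\Theta(\mathbb{E}_{\mathcal{F}_t}(\bar u_t)))\|\le\mathbb{E}_{\mathcal{F}_t}\|\Pi(\Theta(\bar u_t))\|$ without justification, whereas you supply the support-function representation of $\|\Pi(\cdot)\|$ and its monotone composition with the $\mathbf{C}$-convex map $\Theta$, which is exactly what is needed to make that step rigorous.
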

\begin{proof}
\begin{itemize}
\item[(i)]From statement (ii) of Theorem~\ref{thm:primal_rate} with $u\in\mathbf{U}\cap\mathfrak{B}_{{\nu}^+}$ and $p\in\mathbf{C}^*\cap\mathfrak{B}_{\mu}$, we have that
\begin{eqnarray*}
\mathbb{E}_{\mathcal{F}_{t}}[L(\bar{u}_{t},p)-L(u,\bar{p}_{t})]\leq\frac{Nd_4}{\underline{\epsilon}(t+1)},
\end{eqnarray*}
where $d_4=\sup_{(u,p)\in(\mathbf{U}\cap\mathfrak{B}_{{\nu}^+})\times(\mathbf{C}^*\cap\mathfrak{B}_{\mu})}h_3(u,p)$. Since the bifunction $L(u',p)-L(u,p')$ is convex in $u'$ and linear in $p'$ for given $u\in\mathbf{U}$, $p\in\mathbf{C}^*$, we obtain
\begin{eqnarray}\label{saddle:L}
L\big{(}\mathbb{E}_{\mathcal{F}_{t}}(\bar{u}_{t}),p\big{)}-L\big{(}u,\mathbb{E}_{\mathcal{F}_{t}}(\bar{p}_{t})\big{)}\leq\frac{Nd_4}{\underline{\epsilon}(t+1)}.
\end{eqnarray}
Noted $\mathbb{E}_{\mathcal{F}_{t}}(\bar{p}_t)\in\mathbf{C}^*\cap\mathfrak{B}_{\nu}$, now with $p=\mathbb{E}_{\mathcal{F}_{t}}(\bar{p}_{t})$,~\eqref{saddle:L} yields the right inequality of approximate saddle point
\begin{eqnarray*}
L\big{(}\mathbb{E}_{\mathcal{F}_{t}}(\bar{u}_{t}),\mathbb{E}_{\mathcal{F}_{t}}(\bar{p}_t)\big{)}-L\big{(}u,\mathbb{E}_{\mathcal{F}_{t}}(\bar{p}_{t})\big{)}\leq\frac{Nd_4}{\underline{\epsilon}(t+1)},\\ \forall u\in\mathbf{U}\cap\mathfrak{B}_{{\nu}^+},
\end{eqnarray*}
Now considering $\mathbb{E}_{\mathcal{F}_{t}}(\bar{u}_{t})\in\mathbf{U}\cap\mathfrak{B}_{{\nu}^+}$, with $u=\mathbb{E}_{\mathcal{F}_{t}}(\bar{u}_{t})$,~\eqref{saddle:L} yields the left inequality
\begin{eqnarray*}
L\big{(}\mathbb{E}_{\mathcal{F}_{t}}(\bar{u}_{t}),p\big{)}-L\big{(}\mathbb{E}_{\mathcal{F}_{t}}(\bar{u}_{t}),\mathbb{E}_{\mathcal{F}_{t}}(\bar{p}_{t})\big{)}]\leq\frac{Nd_4}{\underline{\epsilon}(t+1)},\\ \forall p\in\mathbf{C}^*\cap\mathfrak{B}_{\mu}.
\end{eqnarray*}
Here comes the results.
\item[(ii)] In the left-hand side of inequality in statement (i), taking $p=0$, we get $\langle\mathbb{E}_{\mathcal{F}_{t}}(\bar{p}_t),\Theta(\mathbb{E}_{\mathcal{F}_{t}}(\bar{u}_t))\rangle\geq-\frac{Nd_4}{\underline{\epsilon}(t+1)}$. Then, from~\eqref{L_gamma}, we have
\begin{eqnarray}\label{saddle:Lr1}
\varphi\big{(}\Theta(\mathbb{E}_{\mathcal{F}_{t}}(\bar{u}_t)),\mathbb{E}_{\mathcal{F}_{t}}(\bar{p}_t)\big{)}&\geq&\langle\mathbb{E}_{\mathcal{F}_{t}}(\bar{p}_t),\Theta(\mathbb{E}_{\mathcal{F}_{t}}(\bar{u}_t))\rangle\nonumber\\
&\geq&-\frac{Nd_4}{\underline{\epsilon}(t+1)}.
\end{eqnarray}
Another hand, for $p\in\mathbf{C}^*\cap\mathfrak{B}_{\mu}$, we have
\begin{eqnarray}\label{saddle:Lr2}
&&\varphi\big{(}\Theta(\mathbb{E}_{\mathcal{F}_{t}}(\bar{u}_t)),p\big{)}\nonumber\\
&=&\min_{\xi\in-\mathbf{C}}\langle p,\Theta(\mathbb{E}_{\mathcal{F}_{t}}(\bar{u}_t))-\xi\rangle+\frac{\gamma}{2}\|\Theta(\mathbb{E}_{\mathcal{F}_{t}}(\bar{u}_t))-\xi\|^2\nonumber\\
&\leq&\langle p,\Theta(\mathbb{E}_{\mathcal{F}_{t}}(\bar{u}_t))-\Pi_{-\mathbf{C}}(\Theta(\mathbb{E}_{\mathcal{F}_{t}}(\bar{u}_t)))\rangle\nonumber\\
&&+\frac{\gamma}{2}\|\Theta(\mathbb{E}_{\mathcal{F}_{t}}(\bar{u}_t))-\Pi_{-\mathbf{C}}(\Theta(\mathbb{E}_{\mathcal{F}_{t}}(\bar{u}_t)))\|^2\nonumber\\
&\leq&\|p\|\cdot\|\Pi(\Theta(\mathbb{E}_{\mathcal{F}_{t}}(\bar{u}_t)))\|+\frac{\gamma}{2}\|\Pi(\Theta(\mathbb{E}_{\mathcal{F}_{t}}(\bar{u}_t)))\|^2\nonumber\\
&\leq&\frac{\mu Nd_3}{\left(\mu-\|p^*\|\right)\underline{\epsilon}(t+1)}+\frac{\gamma N^2(d_3)^2}{2\left(\mu-\|p^*\|\right)^2\underline{\epsilon}^2(t+1)^2}.\\
&&\mbox{(from $\|\Pi(\Theta(\mathbb{E}_{\mathcal{F}_{t}}(\bar{u}_t)))\|\leq\mathbb{E}_{\mathcal{F}_{t}}\|\Pi(\Theta(\bar{u}_t))\|$}\qquad\qquad\nonumber\\
&&\mbox{statement (ii) of Theorem~\ref{thm:primal_rate} and $p\in\mathbf{C}^*\cap\mathfrak{B}_{\mu}$)}\nonumber
\end{eqnarray}
Therefore, we get the left-hand side of inequality in statement (ii):
\begin{eqnarray}\label{eq:Lr_3}
&&L_{\gamma}(\mathbb{E}_{\mathcal{F}_{t}}(\bar{u}_t),p)-L_{\gamma}(\mathbb{E}_{\mathcal{F}_{t}}(\bar{u}_t),\mathbb{E}_{\mathcal{F}_{t}}(\bar{p}_t))\nonumber\\
&=&\varphi(\Theta(\mathbb{E}_{\mathcal{F}_{t}}(\bar{u}_t)),p)-\varphi(\Theta(\mathbb{E}_{\mathcal{F}_{t}}(\bar{u}_t),\mathbb{E}_{\mathcal{F}_{t}}(\bar{p}_t))\nonumber\\
&\leq&\delta_1,
\end{eqnarray}
where $\delta_1=\frac{\mu Nd_3+\left(\mu-\|p^*\|\right)Nd_4}{\left(\mu-\|p^*\|\right)\underline{\epsilon}(t+1)}+\frac{\gamma N^2(d_3)^2}{2\left(\mu-\|p^*\|\right)^2\underline{\epsilon}^2(t+1)^2}$. From~\eqref{saddle:Lr1} and~\eqref{saddle:Lr2}, it also has that
\begin{eqnarray*}
-\frac{Nd_4}{\underline{\epsilon}(t+1)}&\leq&\langle\mathbb{E}_{\mathcal{F}_{t}}(\bar{p}_t),\Theta(\mathbb{E}_{\mathcal{F}_{t}}(\bar{u}_t))\rangle\nonumber\\
&\leq&\varphi(\Theta(\mathbb{E}_{\mathcal{F}_{t}}(\bar{u}_t)),\mathbb{E}_{\mathcal{F}_{t}}(\bar{p}_t))\nonumber\\
&\leq&\frac{\mu Nd_3}{\left(\mu-\|p^*\|\right)\underline{\epsilon}(t+1)}\nonumber\\
&&+\frac{\gamma N^2(d_3)^2}{2\left(\mu-\|p^*\|\right)^2\underline{\epsilon}^2(t+1)^2},
\end{eqnarray*}
which follows that
\begin{eqnarray*} \varphi(\Theta(\mathbb{E}_{\mathcal{F}_{t}}(\bar{u}_t)),\mathbb{E}_{\mathcal{F}_{t}}(\bar{p}_t))-\langle\mathbb{E}_{\mathcal{F}_{t}}(\bar{p}_t),\Theta(\mathbb{E}_{\mathcal{F}_{t}}(\bar{u}_t))\rangle\leq\delta_1.
\end{eqnarray*}
Then, for $u\in\mathbf{U}\cap\mathfrak{B}_{{\nu}^+}$, we have
\begin{eqnarray}\label{eq:Lr_4}
&&L_\gamma(\mathbb{E}_{\mathcal{F}_{t}}(\bar{u}_t),\mathbb{E}_{\mathcal{F}_{t}}(\bar{p}_t))\nonumber\\
&\leq&L(\mathbb{E}_{\mathcal{F}_{t}}(\bar{u}_t),\mathbb{E}_{\mathcal{F}_{t}}(\bar{p}_t))+\delta_1\nonumber\\
&\leq&L(u,\mathbb{E}_{\mathcal{F}_{t}}(\bar{p}_t))+\delta_1+\frac{Nd_4}{\underline{\epsilon}(t+1)}\nonumber\\
&&\qquad\qquad\qquad\mbox{(by right hand side of statement (i))}\nonumber\\
&\leq&L_\gamma(u,\mathbb{E}_{\mathcal{F}_{t}}(\bar{p}_t))+\delta_2,
\end{eqnarray}
where $\delta_2=\delta_1+\frac{Nd_4}{\underline{\epsilon}(t+1)}$. Here comes the right-hand side of inequality in statement (ii).
\end{itemize}
\end{proof}
\begin{theorem}\label{thm:dual_rate} {\bf(Dual suboptimality)}
\noindent Let Assumptions of Theorem~\ref{thm:primal_rate} hold, we have that
\begin{equation}
\psi_{\gamma}(p^*)\leq\psi_{\gamma}\big{(}\mathbb{E}_{\mathcal{F}_{t}}(\bar{p}_t)\big{)}+\delta_1+\delta_2.
\end{equation}
\end{theorem}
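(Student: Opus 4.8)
The plan is to obtain the inequality by sandwiching the quantity $L_{\gamma}\big{(}\mathbb{E}_{\mathcal{F}_{t}}(\bar{u}_t),\mathbb{E}_{\mathcal{F}_{t}}(\bar{p}_t)\big{)}$ between $\psi_{\gamma}(p^*)-\delta_1$ from below and $\psi_{\gamma}\big{(}\mathbb{E}_{\mathcal{F}_{t}}(\bar{p}_t)\big{)}+\delta_2$ from above, by evaluating the two inequalities furnished by Proposition~\ref{prop:saddle point}(ii) at two carefully chosen test points. Recall that $\psi_{\gamma}(p)=\min_{u\in\mathbf{U}}L_{\gamma}(u,p)$ and that the radius $\overline{\nu}=\max(\nu,\nu')$ defining $\mathfrak{B}_{{\nu}^{+}}$ was chosen precisely so that both $\mathbb{E}_{\mathcal{F}_{t}}(\bar{u}_t)$ and every minimizer $\hat{u}(p)$ of $L_{\gamma}(\cdot,p)$ with $p\in\mathfrak{B}_{\mu}$ lie in $\mathbf{U}\cap\mathfrak{B}_{{\nu}^{+}}$.

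\textbf{Upper bound on the middle term.} Since $\mathbb{E}_{\mathcal{F}_{t}}(\bar{p}_t)\in\mathbf{C}^*\cap\mathfrak{B}_{\mu}$, Lemma~\ref{lemma:ALBounded} (applied with $\mathfrak{B}_p=\mathfrak{B}_{\mu}$) produces a minimizer $\hat{u}\big{(}\mathbb{E}_{\mathcal{F}_{t}}(\bar{p}_t)\big{)}\in\arg\min_{u\in\mathbf{U}}L_{\gamma}\big{(}u,\mathbb{E}_{\mathcal{F}_{t}}(\bar{p}_t)\big{)}$ with norm at most $\nu'\leq\overline{\nu}$, hence $\hat{u}\big{(}\mathbb{E}_{\mathcal{F}_{t}}(\bar{p}_t)\big{)}\in\mathbf{U}\cap\mathfrak{B}_{{\nu}^{+}}$. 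Substituting this point for $u$ in the right-hand inequality of Proposition~\ref{prop:saddle point}(ii) gives
\[
L_{\gamma}\big{(}\mathbb{E}_{\mathcal{F}_{t}}(\bar{u}_t),\mathbb{E}_{\mathcal{F}_{t}}(\bar{p}_t)\big{)}\leq L_{\gamma}\big{(}\hat{u}(\mathbb{E}_{\mathcal{F}_{t}}(\bar{p}_t)),\mathbb{E}_{\mathcal{F}_{t}}(\bar{p}_t)\big{)}+\delta_2=\psi_{\gamma}\big{(}\mathbb{E}_{\mathcal{F}_{t}}(\bar{p}_t)\big{)}+\delta_2.
\]

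\textbf{Lower bound and conclusion.} Because $\mu=\mu_0+1$ with $\mu_0$ a bound on the dual optimal solution set, $p^*\in\mathbf{C}^*\cap\mathfrak{B}_{\mu}$; taking $p=p^*$ in the left-hand inequality of Proposition~\ref{prop:saddle point}(ii), then using $L_{\gamma}\big{(}\mathbb{E}_{\mathcal{F}_{t}}(\bar{u}_t),p^*\big{)}\geq\min_{u\in\mathbf{U}}L_{\gamma}(u,p^*)=\psi_{\gamma}(p^*)$, yields
\[
\psi_{\gamma}(p^*)-\delta_1\leq L_{\gamma}\big{(}\mathbb{E}_{\mathcal{F}_{t}}(\bar{u}_t),p^*\big{)}-\delta_1\leq L_{\gamma}\big{(}\mathbb{E}_{\mathcal{F}_{t}}(\bar{u}_t),\mathbb{E}_{\mathcal{F}_{t}}(\bar{p}_t)\big{)}.
\]
Chaining this with the previous display gives $\psi_{\gamma}(p^*)-\delta_1\leq\psi_{\gamma}\big{(}\mathbb{E}_{\mathcal{F}_{t}}(\bar{p}_t)\big{)}+\delta_2$, which rearranges to the claimed bound. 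The only non-mechanical part is the bookkeeping ensuring the two substituted test points lie in the domains on which Proposition~\ref{prop:saddle point}(ii) is valid: $\hat{u}(\mathbb{E}_{\mathcal{F}_{t}}(\bar{p}_t))\in\mathfrak{B}_{{\nu}^{+}}$ via Lemma~\ref{lemma:ALBounded} and the definition of $\overline{\nu}$, and $p^*\in\mathfrak{B}_{\mu}$ via the strict slack $\mu=\mu_0+1$; once these are secured, the rest is just applying the definition of $\psi_{\gamma}$ as a pointwise minimum.
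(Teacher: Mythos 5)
Your proposal is correct and follows essentially the same route as the paper: both sandwich $L_{\gamma}\big(\mathbb{E}_{\mathcal{F}_{t}}(\bar{u}_t),\mathbb{E}_{\mathcal{F}_{t}}(\bar{p}_t)\big)$ using Proposition~\ref{prop:saddle point}(ii) evaluated at the same test points $p=p^*$ and $u=\hat{u}\big(\mathbb{E}_{\mathcal{F}_{t}}(\bar{p}_t)\big)$, the only cosmetic difference being that the paper reaches the lower bound via the saddle-point inequality $L_{\gamma}(u^*,p^*)\leq L_{\gamma}\big(\mathbb{E}_{\mathcal{F}_{t}}(\bar{u}_t),p^*\big)$ together with $\psi_{\gamma}(p^*)=L_{\gamma}(u^*,p^*)$, while you invoke the definition of $\psi_{\gamma}$ as a pointwise minimum directly (the same fact). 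Your explicit bookkeeping that the test points lie in $\mathfrak{B}_{{\nu}^{+}}$ and $\mathfrak{B}_{\mu}$ is a welcome addition that the paper leaves implicit.
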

\begin{proof}
For saddle point $(u^*,p^*)$ of $L$ (or $L_\gamma$) on $\mathbf{U}\times\RR^m$, we have
\begin{eqnarray}\label{eq:saddlepoint_Lr}
L_{\gamma}(u^*,p)\leq L_{\gamma}(u^*,p^*)\leq L_{\gamma}(u,p^*), \forall u\in\mathbf{U}, p\in\RR^m
\end{eqnarray}
Substituting $u=\mathbb{E}_{\mathcal{F}_{t}}(\bar{u}_t)$, $p=\mathbb{E}_{\mathcal{F}_{t}}(\bar{p}_t)$ in~\eqref{eq:saddlepoint_Lr}, and take $u=\hat{u}\big{(}\mathbb{E}_{\mathcal{F}_{t}}(\bar{p}_t)\big{)}$, $p=p^*$ in statement (ii) of Proposition~\ref{prop:saddle point}, we obtain the following two inequalities:
\begin{eqnarray*}
L_{\gamma}\big{(}u^*,\mathbb{E}_{\mathcal{F}_{t}}(\bar{p}_t)\big{)}&\leq&L_{\gamma}(u^*,p^*)\nonumber\\
&\leq&L_{\gamma}\big{(}\mathbb{E}_{\mathcal{F}_{t}}(\bar{u}_t),p^*\big{)}\label{eq:dual_1}\\
-\delta_1+L_{\gamma}\big{(}\mathbb{E}_{\mathcal{F}_{t}}(\bar{u}_{t}),p^*\big{)}&\leq&L_{\gamma}\big{(}\mathbb{E}_{\mathcal{F}_{t}}(\bar{u}_{t}),\mathbb{E}_{\mathcal{F}_{t}}(\bar{p}_{t})\big{)}\nonumber\\
&\leq&L_{\gamma}(\hat{u}(\mathbb{E}_{\mathcal{F}_{t}}\bar{p}_t),\mathbb{E}_{\mathcal{F}_{t}}\bar{p}_{t})+\delta_2.\label{eq:dual_2}
\end{eqnarray*}
Combining the above two inequalities, it follows the desired inequality:
\begin{eqnarray}
-\delta_1+L_{\gamma}(u^*,p^*)\leq L_{\gamma}\big{(}\hat{u}\big{(}\mathbb{E}_{\mathcal{F}_{t}}(\bar{p}_t)\big{)},\mathbb{E}_{\mathcal{F}_{t}}(\bar{p}_{t})\big{)}+\delta_2,
\end{eqnarray}
or
\begin{eqnarray}
\psi_\gamma(p^*)&=&L_{\gamma}(u^*,p^*)\nonumber\\
&\leq&L_{\gamma}\big{(}\hat{u}\big{(}\mathbb{E}_{\mathcal{F}_{t}}(\bar{p}_t)\big{)},\mathbb{E}_{\mathcal{F}_{t}}(\bar{p}_{t})\big{)}+\delta_1+\delta_2\nonumber\\
&=&\psi_\gamma\big{(}\mathbb{E}_{\mathcal{F}_{t}}(\bar{p}_t)\big{)}+\delta_1+\delta_2.
\end{eqnarray}
\end{proof}
Next we will provide the high probability complexity bound of constraints violation and objective function values.
\begin{remark}
From Theorem~\ref{thm:primal_rate}, we immediately get the expect primal suboptimality  for average point $\bar{u}_t$
\begin{eqnarray*}
&&\mathbb{E}_{\mathcal{F}_{t}}\bigg{\{}|F(\bar{u}_{t})-F(u^*)|+\|\Pi(\Theta(\bar{u}_{t}))\|\bigg{\}}\\
&\leq&\frac{(\mu+1)Nd_3}{(\mu-\|p^*\|)\underline{\epsilon}(t+1)}.
\end{eqnarray*}
Let $0<\varepsilon<|F(u^0)-F(u^*)|+\|\Pi(\Theta(u^0))\|$ and $\eta\in(0,1)$ be chosen arbitrarily. For all $t\geq T$, we have high probability complexity bound for obtaining an $\varepsilon$-optimal solution
\begin{equation*}
\mathbb{P}\big{\{}|F(\bar{u}_{t})-F(u^*)|+\|\Pi(\Theta(\bar{u}_{t}))\|\leq\varepsilon\big{\}}\geq 1-\eta,
\end{equation*}
where
\begin{eqnarray}
T:=\frac{(\mu+1)Nd_3}{\varepsilon\eta\underline{\epsilon}(\mu-\|p^*\|)}-1.
\end{eqnarray}
This result is derived from the Markov inequality~\cite{Bert2}. Another representation for this result is:\\
for any $t\geq T$
\begin{eqnarray*}
&&\mathbb{P}\big{\{}|F(\bar{u}_{t})-F(u^*)|+\|\Pi(\Theta(\bar{u}_{t}))\|\geq\varepsilon\big{\}}\nonumber\\
&\leq&\varepsilon^{-1}\mathbb{E}_{\mathcal{F}_{t}}\big{\{}|F(\bar{u}_{t})-F(u^*)|+\|\Pi(\Theta(\bar{u}_{t}))\|\big{\}}\nonumber\\
&\leq&\frac{(\mu+1)Nd_3}{\varepsilon(\mu-\|p^*\|)\underline{\epsilon}(t+1)}\nonumber\\
&\leq&\eta.
\end{eqnarray*}
\end{remark}
\begin{remark}
Here we remark that, for problem (P) with $\Omega(\cdot)=0$ and $\Theta(\cdot)=\Phi(\cdot)$, we modify SPDCL scheme as following:\\
\noindent\rule[0.25\baselineskip]{0.5\textwidth}{1.5pt}
{\bf Stochastic Primal-Dual Coordinate Method with Large step size (SPDCL)}\\
\noindent\rule[0.25\baselineskip]{0.5\textwidth}{0.5pt}
{Initialize} $u^0 \in \mathbf{U}$, $p^0\in \mathbf{R}^m$, and $0<\epsilon<\frac{\beta}{B_G+\gamma\tau^2}$  \\
\textbf{for} $k = 0,1,\cdots $, \textbf{do}
\begin{eqnarray}
&&\mbox{Choose $i(k)$ from $\{1,2,\ldots,N\}$ with equal probability}\nonumber\\
&&u^{k+1}\leftarrow\min_{u\in \mathbf{U}}\langle\nabla_{i(k)} G(u^{k}), u_{i(k)} \rangle + J_{i(k)}(u_{i(k)})\nonumber\\
&&+\langle\Pi(p^k+\gamma\Theta(u^k)),\Phi_{i(k)}(u_{i(k)})\rangle+\frac{1}{\epsilon}D(u,u^k);\label{primal_1}\\
&&p^{k+1}\leftarrow p^{k}+\frac{\rho}{\gamma}\big{(}\Pi(p^k+\gamma\Theta(u^{k+1}))-p^k\big{)},\qquad\label{dual_1}
\end{eqnarray}
\textbf{end for}\\
\noindent\rule[0.25\baselineskip]{0.5\textwidth}{1.5pt}
\indent Obviously, we don't need to estimate the dual optimal bound in the new scheme. Additionally,
using the constant parameter $0<\epsilon<\frac{\beta}{B_G+\gamma\tau^2}$, the results of Lemma~\ref{lemma:bound1} still holds. Therefore the results of convergence (Theorem~\ref{theo:convergence}) and convergence rate results (Theorem~\ref{thm:primal_rate} and~\ref{thm:dual_rate}) of SPDCL still hold.
\end{remark}
\section*{Appendix}
\par\noindent{\em Proof of Lemma~\ref{lemma:bound1}:}\\
{\rm(i)} Firstly, for all $u\in\mathbf{U}$, the unique solution $u^{k+1}$ of the primal problem~\eqref{eq:APk} is characterized by the following variational inequality:
\begin{eqnarray}\label{eq:primal_VI}
\langle\nabla_{i(k)}G(u^{k}),(u^{k+1}-u)_{i(k)}\rangle+J_{i(k)}(u_{i(k)}^{k+1})-J_{i(k)}(u_{i(k)})\nonumber\\
+\langle q^k,\nabla_{i(k)}\Omega(u^k)(u^{k+1}-u)_{i(k)}\rangle\nonumber\\
+\langle q^k,\Phi_{i(k)}(u_{i(k)}^{k+1})-\Phi_{i(k)}(u_{i(k)})\rangle\nonumber\\
+\frac{1}{\epsilon^k}\langle\nabla K(u^{k+1})-\nabla K(u^k), u^{k+1}-u\rangle\leq 0,
\end{eqnarray}
which follows that
\begin{eqnarray}\label{eq:primal_VI2}
\langle\nabla_{i(k)}G(u^{k}),\big{(}u^{k}-u-(u^{k}-u^{k+1})\big{)}_{i(k)}\rangle+J_{i(k)}(u_{i(k)}^{k})\nonumber\\
-J_{i(k)}(u_{i(k)})-\big{(}J_{i(k)}(u_{i(k)}^{k})-J_{i(k)}(u_{i(k)}^{k+1})\big{)}\nonumber\\
+\langle q^k,\nabla_{i(k)}\Omega(u^k)\big{(}u^{k}-u-(u^{k}-u^{k+1})\big{)}_{i(k)}\rangle\nonumber\\
+\langle q^k,\Phi_{i(k)}(u_{i(k)}^{k})-\Phi_{i(k)}(u_{i(k)})\nonumber\\
-\big{(}\Phi_{i(k)}(u_{i(k)}^{k})-\Phi_{i(k)}(u_{i(k)}^{k+1})\big{)}\rangle\nonumber\\
+\frac{1}{\epsilon^k}\langle\nabla K(u^{k+1})-\nabla K(u^k), u^{k+1}-u\rangle\leq 0.
\end{eqnarray}
Observing that\\
$\langle\nabla_{i(k)}G(u^{k}),(u^{k}-u^{k+1})_{i(k)}\rangle=\langle\nabla G(u^{k}),u^{k}-u^{k+1}\rangle$,\\ $J_{i(k)}(u_{i(k)}^{k})-J_{i(k)}(u_{i(k)}^{k+1})=J(u^{k})-J(u^{k+1})$,\\
$\langle q^k,\nabla_{i(k)}\Omega(u^k)(u^{k}-u^{k+1})_{i(k)}\rangle=\langle q^k,\nabla\Omega(u^k)(u^{k}-u^{k+1})\rangle$\\
and $\langle q^k,\Phi_{i(k)}(u_{i(k)}^{k})-\Phi_{i(k)}(u_{i(k)}^{k+1})\rangle=\langle q^k,\Phi(u^{k})-\Phi(u^{k+1})\rangle$, from~\eqref{eq:primal_VI2}, we have that
\begin{eqnarray}\label{eq:primal_bound0}
&&\langle\nabla_{i(k)}G(u^{k}),(u^{k}-u)_{i(k)}\rangle+J_{i(k)}(u_{i(k)}^{k})\nonumber\\
&&-J_{i(k)}(u_{i(k)})+\langle q^k,\nabla_{i(k)}\Omega(u^k)(u^{k}-u)_{i(k)}\rangle\nonumber\\
&&+\langle q^k,\Phi_{i(k)}(u_{i(k)}^{k})-\Phi_{i(k)}(u_{i(k)})\rangle\nonumber\\
&\leq&\langle\nabla G(u^{k}),u^{k}-u^{k+1}\rangle+J(u^{k})-J(u^{k+1})\nonumber\\
&&+\langle q^k,\nabla\Omega(u^k)(u^{k}-u^{k+1})+\Phi(u^{k})-\Phi(u^{k+1})\rangle\nonumber\\
&&+\frac{1}{\epsilon^k}\langle\nabla K(u^{k+1})-\nabla K(u^k), u-u^{k+1}\rangle
\end{eqnarray}
By statement (ii) and (iii) of Lemma~\ref{lemma:Lipschitz3point}, we have that
\begin{eqnarray}\label{eq:primal_bound1}
&&\langle\nabla_{i(k)}G(u^{k}),(u^{k}-u)_{i(k)}\rangle+J_{i(k)}(u_{i(k)}^{k})\nonumber\\
&&-J_{i(k)}(u_{i(k)})+\langle q^k,\nabla_{i(k)}\Omega(u^k)(u^{k}-u)_{i(k)}\rangle\nonumber\\
&&+\langle q^k,\Phi_{i(k)}(u_{i(k)}^{k})-\Phi_{i(k)}(u_{i(k)})\rangle\nonumber\\
&\leq&F(u^k)-F(u^{k+1})+\frac{B_G+\|q^k\|T}{2}\|u^{k}-u^{k+1}\|^2\nonumber\\
&&+\langle q^k,\Theta(u^{k})-\Theta(u^{k+1})\rangle\nonumber\\
&&+\frac{1}{\epsilon^k}\langle\nabla K(u^{k+1})-\nabla K(u^k), u-u^{k+1}\rangle.
\end{eqnarray}
The simple algebraic operation and Assumption~\ref{assump2} follows that
\begin{eqnarray}\label{eq:primal_bound2}
&&\frac{1}{\epsilon^k}\langle \nabla K(u^{k+1})-\nabla K(u^k),u-u^{k+1}\rangle\nonumber\\
&=&\frac{1}{\epsilon^k}\big{[}D(u,u^k)-D(u,u^{k+1})-D(u^{k+1},u^k)\big{]}\nonumber\\
&\leq&\frac{1}{\epsilon^k}\big{[}D(u,u^k)-D(u,u^{k+1})\big{]}\nonumber\\
&&-\frac{\beta}{2\epsilon^k}\|u^k-u^{k+1}\|^2.
\end{eqnarray}
Combining~\eqref{eq:primal_bound1} and~\eqref{eq:primal_bound2}, we obtain that
\begin{eqnarray}\label{eq:primal_bound3}
&&\langle\nabla_{i(k)}G(u^{k}),(u^{k}-u)_{i(k)}\rangle+J_{i(k)}(u_{i(k)}^{k})-J_{i(k)}(u_{i(k)})\nonumber\\
&&+\langle q^k,\nabla_{i(k)}\Omega(u^k)(u^{k}-u)_{i(k)}\nonumber\\
&&+\Phi_{i(k)}(u_{i(k)}^{k})-\Phi_{i(k)}(u_{i(k)})\rangle\nonumber\\
&\leq&\frac{1}{\epsilon^k}\big{[}D(u,u^k)-D(u,u^{k+1})\big{]}\nonumber\\
&&+F(u^k)-F(u^{k+1})+\langle q^k,\Theta(u^{k})-\Theta(u^{k+1})\rangle\nonumber\\
&&-\frac{\beta-\epsilon^k(B_G+\|q^k\|T)}{2\epsilon^k}\|u^k-u^{k+1}\|^2.
\end{eqnarray}
Take expectation with respect to $i(k)$ on both side of~\eqref{eq:primal_bound3}, together the condition expectation~\eqref{expectationG}-\eqref{expectationP_2}, we get
\begin{eqnarray}\label{eq:primal_bound3_1}
&&\frac{1}{N}\big{[}F(u^{k})-F(u)+\langle q^k,\Theta(u^{k})-\Theta(u)\rangle\big{]}\nonumber\\
&\leq&\frac{1}{\epsilon^k}\big{[}D(u,u^k)-\mathbb{E}_{i(k)}D(u,u^{k+1})\big{]}\nonumber\\
&&+\mathbb{E}_{i(k)}\bigg{\{}\big{[}F(u^k)-F(u^{k+1})+\langle q^k,\Theta(u^{k})-\Theta(u^{k+1})\rangle\big{]}\nonumber\\
&&-\frac{\beta-\epsilon^k(B_G+\|q^k\|T)}{2\epsilon^k}\|u^k-u^{k+1}\|^2\bigg{\}}.
\end{eqnarray}
It follows that
\begin{eqnarray}\label{eq:primal_bound4}
&&\frac{1}{N}\mathbb{E}_{i(k)}\big{[}F(u^{k+1})-F(u)+\langle q^k,\Theta(u^{k+1})-\Theta(u)\rangle\big{]}\nonumber\\
&\leq&\frac{1}{\epsilon^k}\big{[}D(u,u^k)-\mathbb{E}_{i(k)}D(u,u^{k+1})\big{]}\nonumber\\
&&+\mathbb{E}_{i(k)}\bigg{\{}\frac{N-1}{N}\big{[}F(u^k)-F(u^{k+1})\nonumber\\
&&+\langle q^k,\Theta(u^{k})-\Theta(u^{k+1})\rangle\big{]}\nonumber\\
&&-\frac{\beta-\epsilon^k(B_G+\|q^k\|T)}{2\epsilon^k}\|u^k-u^{k+1}\|^2\bigg{\}}
\end{eqnarray}
By $\nabla_{\theta}\varphi(\Theta(u),p)=\Pi(p+\gamma\Theta(u))$ in Theorem~\ref{theo:Lag_2}. Then it follows that
\begin{eqnarray}
&&\|\nabla_{\theta}\varphi(\Theta(u),p)-\nabla_{\theta}\varphi(\Theta(\hat{u}),p)\|\nonumber\\
&=&\|\Pi(p+\gamma\Theta(u))-\Pi(p+\gamma\Theta(\hat{u}))\|\nonumber\\
&\leq&\gamma\|\Theta(u)-\Theta(\hat{u})\|.
\end{eqnarray}
Together with statement (ii) of Lemma~\ref{lemma:Lipschitz3point}, we have that
\begin{eqnarray}\label{eq:primal_bound4-1}
&&\langle q^k,\Theta(u^{k})-\Theta(u^{k+1})\rangle\nonumber\\
&\leq&\varphi(\Theta(u^k),p^k)-\varphi(\Theta(u^{k+1}),p^k)\nonumber\\
&&+\frac{\gamma}{2}\|\Theta(u^k)-\Theta(u^{k+1})\|^2\nonumber\\
&\leq&\varphi(\Theta(u^k),p^k)-\varphi(\Theta(u^{k+1}),p^k)+\frac{\gamma\tau^2}{2}\|u^k-u^{k+1}\|^2\nonumber\\
\end{eqnarray}
Combining~\eqref{eq:primal_bound4} and~\eqref{eq:primal_bound4-1}, we have that
\begin{eqnarray}\label{eq:primal_bound5}
&&\frac{1}{N}\mathbb{E}_{i(k)}\big{[}F(u^{k+1})-F(u)+\langle q^k,\Theta(u^{k+1})-\Theta(u)\rangle\big{]}\nonumber\\
&\leq&\frac{1}{\epsilon^k}\big{[}D(u,u^k)-\mathbb{E}_{i(k)}D(u,u^{k+1})\big{]}\nonumber\\
&&+\frac{N-1}{N}\mathbb{E}_{i(k)}\big{[}L_{\gamma}(u^k,p^k)-L_{\gamma}(u^{k+1},p^k)\nonumber\\
&&+\frac{\gamma\tau^2}{2}\|u^k-u^{k+1}\|^2\big{]}\nonumber\\
&&-\frac{\beta-\epsilon^k(B_G+\|q^k\|T)}{2\epsilon^k}\mathbb{E}_{i(k)}\|u^k-u^{k+1}\|^2\nonumber\\
&=&\frac{1}{\epsilon^k}\big{[}D(u,u^k)-\mathbb{E}_{i(k)}D(u,u^{k+1})\big{]}\nonumber\\
&&+\frac{N-1}{N}\mathbb{E}_{i(k)}\left[L_{\gamma}(u^k,p^k)-L_{\gamma}(u^{k+1},p^{k+1})\right]\nonumber\\
&&+\frac{N-1}{N}\mathbb{E}_{i(k)}\left[\varphi\big{(}\Theta(u^{k+1}),p^{k+1}\big{)}-\varphi\big{(}\Theta(u^{k+1}),p^{k}\big{)}\right]\nonumber\\
&&-\frac{\beta-\epsilon^k(B_G+\|q^k\|T+\frac{N-1}{N}\gamma\tau^2)}{2\epsilon^k}\mathbb{E}_{i(k)}\|u^k-u^{k+1}\|^2\nonumber\\
\end{eqnarray}
From concavity of $\varphi\big{(}\Theta(u),p\big{)}$ in $p$ and statement (ii) of Theorem~\ref{theo:Lag_2}, the third term of~\eqref{eq:primal_bound5} follows that
\begin{eqnarray}\label{eq:primal_bound5-1}
&&\varphi\big{(}\Theta(u^{k+1}),p^{k+1}\big{)}-\varphi\big{(}\Theta(u^{k+1}),p^{k}\big{)}\nonumber\\
&\leq&\frac{1}{\gamma}\langle q^{k+1/2}-p^k, p^{k+1}-p^k\rangle\nonumber\\
&\leq&\frac{1}{\gamma}\|q^{k+1/2}-p^k\|\cdot\|p^{k+1}-p^k\|\nonumber\\
&\leq&\frac{1}{\gamma}\|q^{k+1/2}-p^k\|\cdot\|p^{k}+\frac{\rho}{\gamma}(q^{k+1/2}-p^k)-p^k\|\nonumber\\
&&\qquad\qquad\mbox{(since dual update~\eqref{dual} and $p^k\in\mathfrak{B}_{\mu}$)}\nonumber\\
&=&\frac{\rho}{\gamma^2}\|q^{k+1/2}-p^k\|^2.
\end{eqnarray}
Together~\eqref{eq:primal_bound5} and inequality~\eqref{eq:primal_bound5-1}, we have that
\begin{eqnarray}\label{eq:primal_bound5-3}
&&\frac{1}{N}\mathbb{E}_{i(k)}\big{[}F(u^{k+1})-F(u)+\langle q^k,\Theta(u^{k+1})-\Theta(u)\rangle\big{]}\nonumber\\
&\leq&\frac{1}{\epsilon^k}\big{[}D(u,u^k)-\mathbb{E}_{i(k)}D(u,u^{k+1})\big{]}\nonumber\\
&&+\frac{N-1}{N}\mathbb{E}_{i(k)}\left[L_{\gamma}(u^k,p^k)-L_{\gamma}(u^{k+1},p^{k+1})\right]\nonumber\\
&&+\frac{(N-1)\rho}{N\gamma^2}\mathbb{E}_{i(k)}\|q^{k+1/2}-p^k\|^2\nonumber\\
&&-\frac{\beta-\epsilon^k(B_G+\|q^k\|T+\frac{N-1}{N}\gamma\tau^2)}{2\epsilon^k}\mathbb{E}_{i(k)}\|u^k-u^{k+1}\|^2\nonumber\\
\end{eqnarray}
Multiply $\epsilon^k$ on both side of~\eqref{eq:primal_bound5-3}, by the definition of $\Lambda(u,p,u^k,p^k)$, statement (i) is provided.\\
\\
{\rm(ii)} In order to prove statement (ii), we first derive two inequalities. By the property~\eqref{eq:Projecproperty3} of projection with $y=p\in\mathbf{C}^*$ and $x=p^k+\gamma\Theta(u^{k+1})$ ,we have
\begin{eqnarray}\label{eq:dual_bound1}
\frac{1}{\gamma}\langle p-q^{k+1/2},p^k+\gamma\Theta(u^{k+1})-q^{k+1/2}\rangle\leq0.
\end{eqnarray}
Using Proposition~\ref{proposition} with $x=\gamma\Theta(u^{k+1})$, $y=\gamma\Theta(u^k)$ and $z=p^k$, we have
\begin{eqnarray}\label{eq:dual_bound2}
&&2\langle q^{k+1/2}-q^k,\gamma\Theta(u^{k+1})\rangle\nonumber\\
&\leq&\gamma^2\tau^2\|u^k-u^{k+1}\|^2+\|q^{k+1/2}-p^k\|^2-\|q^k-p^k\|^2.\nonumber\\
\end{eqnarray}
For all $p\in\mathbf{C}^*$, from~\eqref{eq:dual_bound1}, it follows:
\begin{eqnarray}\label{eq:dual_bound2-1}
&&L(u^{k+1},p)-L(u^{k+1},q^k)\nonumber\\
&=&\langle p-q^k,\Theta(u^{k+1})\rangle\nonumber\\
&=&\frac{1}{\gamma}\bigg{[}\langle p-q^{k+1/2},p^k+\gamma\Theta(u^{k+1})-q^{k+1/2}\rangle\nonumber\\
&&+\langle p-q^{k+1/2}, q^{k+1/2}-p^k\rangle\nonumber\\
&&+\langle q^{k+1/2}-q^k,\gamma\Theta(u^{k+1})\rangle\bigg{]}\nonumber\\
&\leq&\frac{1}{\gamma}\bigg{[}\langle p-q^{k+1/2}, q^{k+1/2}-p^k\rangle\nonumber\\
&&+\langle q^{k+1/2}-q^k,\gamma\Theta(u^{k+1})\rangle\bigg{]}.
\end{eqnarray}
Together~\eqref{eq:dual_bound2-1} and~\eqref{eq:dual_bound2}, we have
\begin{eqnarray}\label{eq:dual_bound2-1-1}
&&L(u^{k+1},p)-L(u^{k+1},q^k)\nonumber\\
&=&\frac{1}{\gamma}\bigg{[}\langle p-p^k, q^{k+1/2}-p^k\rangle-\|q^{k+1/2}-p^k\|^2\nonumber\\
&&+\langle q^{k+1/2}-q^k,\gamma\Theta(u^{k+1})\rangle\bigg{]}\nonumber\\
&\leq&\frac{1}{\gamma}\langle p-p^k, q^{k+1/2}-p^k\rangle-\frac{1}{2\gamma}\|q^{k+1/2}-p^k\|^2\nonumber\\
&&-\frac{1}{2\gamma}\|q^k-p^k\|^2+\frac{\gamma\tau^2}{2}\|u^k-u^{k+1}\|^2
\end{eqnarray}
Since $p^k,p^{k+1}\in\mathfrak{B}_{\mu}$, we have that: $\forall p\in\mathbf{C}^*\cap\mathfrak{B}_{\mu}$,
\begin{eqnarray}\label{eq:dual_bound2-2}
&&\|p^k-p\|^2-\|p^{k+1}-p\|^2\nonumber\\
&\geq&\|p^k-p\|^2-\|p^{k}+\frac{\rho}{\gamma}(q^{k+1/2}-p^k)-p\|^2\nonumber\\
&=&2\langle p-p^k,\frac{\rho}{\gamma}(q^{k+1/2}-p^k)\rangle-\frac{\rho^2}{\gamma^2}\|q^{k+1/2}-p^k\|^2\nonumber\\
\end{eqnarray}
Together~\eqref{eq:dual_bound2-1} and~\eqref{eq:dual_bound2-2}, we have that: $\forall p\in\mathbf{C}^*\cap\mathfrak{B}_{\mu}$,
\begin{eqnarray}\label{eq:dual_bound3}
&&L(u^{k+1},p)-L(u^{k+1},q^k)\nonumber\\
&\leq&\frac{1}{2\rho}\left[\|p^k-p\|^2-\|p^{k+1}-p\|^2\right]\nonumber\\
&&+\frac{\rho-\gamma}{2\gamma^{2}}\|q^{k+1/2}-p^k\|^2\nonumber\\
&&-\frac{1}{2\gamma}\|q^k-p^k\|^2+\frac{\gamma\tau^2}{2}\|u^k-u^{k+1}\|^2
\end{eqnarray}
Multiply $\frac{\epsilon^k}{N}$ on both side of above inequality, by $\rho=\frac{\gamma}{2N-1}$ we obtain that: $\forall p\in\mathbf{C}^*\cap\mathfrak{B}_{\mu}$
\begin{eqnarray}\label{eq:dual_bound4}
&&\frac{\epsilon^k}{N}\big{[}L(u^{k+1},p)-L(u^{k+1},q^k)\big{]}\nonumber\\
&=&\frac{\epsilon^k}{2N\rho}\big{[}\|p-p^k\|^2-\|p-p^{k+1}\|^2\big{]}\nonumber\\
&&+\frac{(1-N)\rho\epsilon^k}{N\gamma^{2}}\|q^{k+1/2}-p^k\|^2\nonumber\\
&&-\frac{\epsilon^k}{2N\gamma}\|q^k-p^k\|^2+\frac{\epsilon^k\frac{1}{N}\gamma\tau^2}{2}\|u^k-u^{k+1}\|^2.
\end{eqnarray}
Statement (ii) is provided by take expectation with respect to $i(k)$ on both side of inequality~\eqref{eq:dual_bound4}.\\
\\
{\rm(iii)} Summing the two inequalities in statement (i) and statement (ii), we have that
\begin{eqnarray*}
&&\frac{\epsilon^k}{N}\mathbb{E}_{i(k)}\big{[}L(u^{k+1},p)-L(u,q^{k})\big{]}\\
&\leq&\Lambda^k(u,p,u^k,p^k)-\mathbb{E}_{i(k)}\Lambda^k(u,p,u^{k+1},p^{k+1})\\
&&-\mathbb{E}_{i(k)}\big{[}\frac{\beta-\epsilon^k(B_G+\|q^k\|T+\gamma\tau^2)}{2}\|u^k-u^{k+1}\|^2\\
&&-\frac{\epsilon^k}{2N\gamma}\|q^k-p^k\|^2\big{]}.
\end{eqnarray*}
Since the SPDCL scheme guarantees that
$$\epsilon^k\leq\frac{\beta}{2(B_G+\|q^k\|T+\gamma\tau^2)},$$
then we have the statement (iii).\\
\\
{\rm(iv)} From~\eqref{eq:primal_bound3_1}, we have that
\begin{eqnarray}\label{eq:sum_bound_1}
&&\frac{1}{N}\big{[}L(u^{k},q^k)-L(u,q^k)\big{]}\nonumber\\
&=&\frac{1}{N}\big{[}F(u^{k})-F(u)+\langle q^k,\Theta(u^{k})-\Theta(u)\rangle\big{]}\nonumber\\
&\leq&\frac{1}{\epsilon^k}\big{[}D(u,u^k)-\mathbb{E}_{i(k)}D(u,u^{k+1})\big{]}\nonumber\\
&&+\mathbb{E}_{i(k)}\big{[}F(u^k)-F(u^{k+1})+\langle q^k,\Theta(u^{k})-\Theta(u^{k+1})\rangle\big{]}\nonumber\\
&&-\frac{\beta-\epsilon^k(B_G+\|q^k\|T)}{2\epsilon^k}\|u^k-u^{k+1}\|^2\qquad\mbox{(by~\eqref{eq:primal_bound3_1})}\nonumber\\
&=&\frac{1}{\epsilon^k}\mathbb{E}_{i(k)}[K(u)-K (u^{k})-\langle\nabla K(u^k),u-u^k\rangle\nonumber\\
&&-K(u)+K(u^{k+1})+\langle\nabla K(u^{k+1}),u-u^{k+1}]\nonumber\\
&&+\mathbb{E}_{i(k)}\big{[}F(u^k)-F(u^{k+1})+\langle q^k,\Theta(u^{k})-\Theta(u^{k+1})\rangle\big{]}\nonumber\\
&&-\frac{\beta-\epsilon^k(B_G+\|q^k\|T)}{2\epsilon^k}\|u^k-u^{k+1}\|^2\nonumber\\
&=&\frac{1}{\epsilon^k}\mathbb{E}_{i(k)}[K(u^{k+1})-K (u^{k})-\langle\nabla K(u^{k}),u^{k+1}-u^{k}\rangle\nonumber\\
&&+\langle\nabla K(u^{k+1})-\nabla K(u^k),u-u^{k+1}\rangle]\nonumber\\
&&+\mathbb{E}_{i(k)}\big{[}F(u^k)-F(u^{k+1})+\langle q^k,\Theta(u^{k})-\Theta(u^{k+1})\rangle\big{]}\nonumber\\
&&-\frac{\beta-\epsilon^k(B_G+\|q^k\|T)}{2\epsilon^k}\|u^k-u^{k+1}\|^2\nonumber\\
&\leq&\mathbb{E}_{i(k)}\bigg{[}\frac{B}{\epsilon^k}\big{(}\frac{1}{2}\|u^k-u^{k+1}\|^2+\|u-u^{k+1}\|\|u^k-u^{k+1}\|\big{)}\nonumber\\
&&+\big{(}\|\nabla G(u^k)\|+c_1\|u^k\|+c_2+\tau \|q^k\|\big{)}\|u^k-u^{k+1}\|\nonumber\\
&&-\frac{\beta-\epsilon^k(B_G+\|q^k\|T)}{2\epsilon^k}\|u^k-u^{k+1}\|^2\bigg{]}.
\end{eqnarray}
From~\eqref{eq:dual_bound3}, we have that
\begin{eqnarray}\label{eq:sum_bound_2}
&&\frac{1}{N}\big{[}L(u^{k},p)-L(u^{k},q^k)\big{]}\nonumber\\
&=&\frac{1}{N}\langle p-q^k,\Theta(u^k)\rangle\nonumber\\
&=&\frac{1}{N}[\langle p-q^k,\Theta(u^{k+1})\rangle+\langle p-q^k,\Theta(u^k)-\Theta(u^{k+1})\rangle]\nonumber\\
&\leq&\frac{1}{2N\rho}\big{[}\|p-p^k\|^2-\|p-p^{k+1}\|^2\big{]}\nonumber\\
&&+\frac{(1-N)\rho}{N\gamma^{2}}\|q^{k+1/2}-p^k\|^2-\frac{1}{2N\gamma}\|q^k-p^k\|^2\nonumber\\
&&+\frac{\frac{1}{N}\gamma\tau^2}{2}\|u^k-u^{k+1}\|^2\nonumber\\
&&+\frac{1}{N}\langle p-q^k,\Theta(u^k)-\Theta(u^{k+1})\rangle\nonumber\\
&\leq&\frac{1}{2N\rho}\big{[}\|p-p^k\|^2-\|p-p^{k+1}\|^2\big{]}+\frac{\frac{1}{N}\gamma\tau^2}{2}\|u^k-u^{k+1}\|^2\nonumber\\
&&+\frac{1}{N}\langle p-q^k,\Theta(u^k)-\Theta(u^{k+1})\rangle
\end{eqnarray}
Since\\
$\|p-p^k\|^2-\|p-p^{k+1}\|^2=\langle2p-p^{k+1}-p^k,p^{k+1}-p^k\rangle\leq\|2p-p^{k+1}-p^k\|\cdot\|p^{k+1}-p^k\|$\\
and $\langle p-q^k,\Theta(u^k)-\Theta(u^{k+1})\rangle\leq\tau\|p-q^k\|\cdot\|u^k-u^{k+1}\|$, we have
\begin{eqnarray}\label{eq:sum_bound_2-1}
&&\frac{1}{N}\big{[}L(u^{k},p)-L(u^{k},q^k)\big{]}\nonumber\\
&\leq&\frac{1}{2N\rho}\|2p-p^{k+1}-p^k\|\cdot\|p^{k+1}-p^k\|\nonumber\\
&&+\frac{\frac{1}{N}\gamma\tau^2}{2}\|u^k-u^{k+1}\|^2\nonumber\\
&&+\frac{\tau}{N}\|p-q^k\|\cdot\|u^k-u^{k+1}\|.
\end{eqnarray}
Take expectation with respect to $i(k)$ on both side of~\eqref{eq:sum_bound_2-1} and sum with~\eqref{eq:sum_bound_1}, we obtain that
\begin{eqnarray}\label{eq:sum_bound_3}
&&\frac{1}{N}\big{[}L(u^{k},p)-L(u,q^k)\big{]}\nonumber\\
&\leq&\mathbb{E}_{i(k)}\bigg{[}\frac{B}{2\epsilon^k}\|u^k-u^{k+1}\|^2\nonumber\\
&&+h_1(\epsilon^k,u,p,u^k,u^{k+1},q^k)\|u^k-u^{k+1}\|\nonumber\\
&&+h_2(p,p^k,p^{k+1})\|p^{k+1}-p^k\|\nonumber\\
&&-\frac{\beta-\epsilon^k(B_G+\|q^k\|T+\frac{1}{N}\gamma\tau^2)}{2\epsilon^k}\|u^k-u^{k+1}\|^2\bigg{]}\nonumber\\
&\leq&\mathbb{E}_{i(k)}\big{[}\frac{B}{2\epsilon^k}\|u^k-u^{k+1}\|^2\nonumber\\
&&+h_1(\epsilon^k,u,p,u^k,u^{k+1},q^k)\|u^k-u^{k+1}\|\nonumber\\
&&+h_2(p,p^k,p^{k+1})\|p^{k+1}-p^k\|\big{]}.
\end{eqnarray}
where $h_1(\epsilon^k,u,p,u^k,u^{k+1},q^k)=\frac{B}{\epsilon^k}\|u-u^{k+1}\|+[\|\nabla G(u^k)\|+c_1\|u^k\|+c_2+\tau \|q^k\|]+\frac{\tau}{N}\|p-q^k\|$
 and $h_2(p,p^k,p^{k+1})=\frac{1}{2N\rho}\|2p-p^{k+1}-p^k\|$.
\endproof
\section*{Acknowledgment}

The authors would like to thank...

\ifCLASSOPTIONcaptionsoff
  \newpage
\fi

\begin{IEEEbiographynophoto}{Daoli Zhu}
Biography text here.
\end{IEEEbiographynophoto}
\begin{IEEEbiographynophoto}{Lei Zhao}
Biography text here.
\end{IEEEbiographynophoto}




\end{document}